\let\ifGm@compatii\relax\makeatother
\begin{document}

\theoremstyle{plain}
\newtheorem{Thm}{Theorem}[section]
\newtheorem{TitleThm}[Thm]{}
\newtheorem{Corollary}[Thm]{Corollary}
\newtheorem{Proposition}[Thm]{Proposition}
\newtheorem{Lemma}[Thm]{Lemma}
\newtheorem{Conjecture}[Thm]{Conjecture}
\theoremstyle{definition}
\newtheorem{Definition}[Thm]{Definition}
\theoremstyle{definition}
\newtheorem{Example}[Thm]{Example}
\newtheorem{TitleExample}[Thm]{}
\newtheorem{Remark}[Thm]{Remark}
\newtheorem{SimpRemark}{Remark}
\renewcommand{\theSimpRemark}{}

\numberwithin{equation}{section}

\newcommand{\C}{{\mathbb C}}
\newcommand{\Q}{{\mathbb Q}}
\newcommand{\R}{{\mathbb R}}
\newcommand{\Z}{{\mathbb Z}}
\newcommand{\mbS}{{\mathbb S}}
\newcommand{\mbU}{{\mathbb U}}
\newcommand{\mbO}{{\mathbb O}}
\newcommand{\mbG}{{\mathbb G}}
\newcommand{\mbH}{{\mathbb H}}

\newcommand{\flushpar}{\par \noindent}

\newcommand{\proj}{{\rm proj}}
\newcommand{\coker}{{\rm coker}\,}
\newcommand{\Sol}{{\rm Sol}}
\newcommand{\supp}{{\rm supp}\,}
\newcommand{\codim}{{\operatorname{codim}}}
\newcommand{\sing}{{\operatorname{sing}}}
\newcommand{\Tor}{{\operatorname{Tor}}}
\newcommand{\Hom}{{\operatorname{Hom}}}
\newcommand{\wt}{{\operatorname{wt}}}
\newcommand{\graph}{{\operatorname{graph}}}
\newcommand{\rk}{{\operatorname{rk}}}
\newcommand{\dlog}{{\operatorname{Derlog}}}
\newcommand{\Olog}[2]{\Omega^{#1}(\text{log}#2)}
\newcommand{\produnion}{\cup \negmedspace \negmedspace 
\negmedspace\negmedspace {\scriptstyle \times}}
\newcommand{\pd}[2]{\dfrac{\partial#1}{\partial#2}}

\def \ba {\mathbf {a}}
\def \bb {\mathbf {b}}
\def \bc {\mathbf {c}}
\def \bd {\mathbf {d}}
\def \bone {\boldsymbol {1}}
\def \bg {\mathbf {g}}
\def \bG {\mathbf {G}}
\def \bh {\mathbf {h}}
\def \bi {\mathbf {i}}
\def \bj {\mathbf {j}}
\def \bk {\mathbf {k}}
\def \bK {\mathbf {K}}
\def \bm {\mathbf {m}}
\def \bn {\mathbf {n}}
\def \bt {\mathbf {t}}
\def \bu {\mathbf {u}}
\def \bv {\mathbf {v}}
\def \by {\mathbf {y}}
\def \bV {\mathbf {V}}
\def \bx {\mathbf {x}}
\def \bw {\mathbf {w}}
\def \b1 {\mathbf {1}}
\def \bga {\boldsymbol \alpha}
\def \bgb {\boldsymbol \beta}
\def \bgg {\boldsymbol \gamma}

\def \itc {\text{\it c}}
\def \ite {\text{\it e}}
\def \ith {\text{\it h}}
\def \iti {\text{\it i}}
\def \itj {\text{\it j}}
\def \itm {\text{\it m}}
\def \itM {\text{\it M}} 
\def \itn {\text{\it n}}
\def \ithn {\text{\it hn}}
\def \itt {\text{\it t}}

\def \cA {\mathcal{A}}
\def \cB {\mathcal{B}}
\def \cC {\mathcal{C}}
\def \cD {\mathcal{D}}
\def \cE {\mathcal{E}}
\def \cF {\mathcal{F}}
\def \cG {\mathcal{G}}
\def \cH {\mathcal{H}}
\def \cK {\mathcal{K}}
\def \cL {\mathcal{L}}
\def \cM {\mathcal{M}}
\def \cN {\mathcal{N}}
\def \cO {\mathcal{O}}
\def \cP {\mathcal{P}}
\def \cS {\mathcal{S}}
\def \cT {\mathcal{T}}
\def \cU {\mathcal{U}}
\def \cV {\mathcal{V}}
\def \cW {\mathcal{W}}
\def \cX {\mathcal{X}}
\def \cY {\mathcal{Y}}
\def \cZ {\mathcal{Z}}

\def \ga {\alpha}
\def \gb {\beta}
\def \gg {\gamma}
\def \gd {\delta}
\def \ge {\epsilon}
\def \gevar {\varepsilon}
\def \gk {\kappa}
\def \gl {\lambda}
\def \gs {\sigma}
\def \gt {\tau}
\def \gw {\omega}
\def \gz {\zeta}
\def \gG {\Gamma}
\def \gD {\Delta}
\def \gL {\Lambda}
\def \gS {\Sigma}
\def \gW {\Omega}

\def \dim {{\rm dim}\,}
\def \mod {{\rm mod}\;}
\def \rank {{\rm rank}\,}

\newcommand{\ds}{\displaystyle}
\newcommand{\vf}{\vspace{\fill}}
\newcommand{\vect}[1]{{\bf{#1}}}
\def\R{\mathbb R}
\def\C{\mathbb C}
\def\CP{\mathbb{C}P}
\def\RP{\mathbb{R}P}
\def\N{\mathbb N}

\def\Sym{\mathrm{Sym}}
\def\Sk{\mathrm{Sk}}
\def\GL{\mathrm{GL}}
\def\Diff{\mathrm{Diff}}
\def\id{\mathrm{id}}
\def\Pf{\mathrm{Pf}}
\def\sll{\mathfrak{sl}}
\def\g{\mathfrak{g}}
\def\h{\mathfrak{h}}
\def\k{\mathfrak{k}}
\def\t{\mathfrak{t}}
\def\OcN{\mathscr{O}_{\C^N}}
\def\Ocn{\mathscr{O}_{\C^n}}
\def\Ocm{\mathscr{O}_{\C^m}}
\def\Ocnz{\mathscr{O}_{\C^n,0}}
\def\Derlog{\mathrm{Derlog}\,}
\def\expdeg{\mathrm{exp\,deg}\,}

\title[Characteristic Cohomology II: Matrix Singularities]
{Characteristic Cohomology II: Matrix Singularities}
\author[James Damon]{James Damon}

\address{Department of Mathematics, University of North Carolina, Chapel 
Hill, NC 27599-3250, USA
}

\keywords{characteristic cohomology of Milnor fibers, complements, links, varieties of symmetric, skew-symmetric, singular matrices, global Milnor fibration, classical symmetric spaces, Cartan Model, Schubert decomposition, detecting nonvanishing cohomology, vanishing compact models, kite maps}

\subjclass{Primary: 11S90, 32S25, 55R80
Secondary:  57T15, 14M12, 20G05}

\begin{abstract}
For a germ of a variety $\cV, 0 \subset \C^N, 0$, a singularity $\cV_0$ of \lq\lq type $\cV$\rq\rq\, is given by a germ $f_0 : \C^n, 0 \to \C^N, 0$ which is transverse to $\cV$ in an appropriate sense so that $\cV_0 = f_0^{-1}(\cV)$.  In part I of this paper \cite{D6}  we introduced for such singularities the Characteristic Cohomology for the Milnor fiber (for 
$\cV$ a hypersurface), and complement and link (for the general case).  It captures the cohomology of $\cV_0$ inherited from $\cV$ and is given by subalgebras of the cohomology for $\cV_0$ for the Milnor fiber and complements and is a subgroup for the cohomology of the link.  We showed these cohomologies are functorial and invariant under groups of equivalences $\cK_{H}$ for Milnor fibers and $\cK_{\cV}$ for complements and links.  We also gave geometric criteria for detecting the non-vanishing of the characteristic cohomology.  
\par
 In this paper we apply these methods in the case $\cV$ denotes any of the varieties of singular $m \times m$ complex matrices which may be either general, symmetric or 
skew-symmetric (with $m$ even).  For these varieties we have shown in another paper that their Milnor fibers and complements have compact \lq\lq model submanifolds\rq\rq\, for their homotopy types, which are classical symmetric spaces in the sense of Cartan.  As a result, it follows that the characteristic cohomology subalgebras for the Milnor fibers and complements are images of exterior algebras (or in one case a module on two generators over an exterior algebra).  In addition, we extend these results to general $m \times p$ complex matrices in the case of the complement and link.  \par
We then apply the geometric detection method introduced in Part I to detect when characteristic cohomology for the Milnor fiber or complement contains a specific exterior subalgebra on $\ell$ generators and for the link that it contains an appropriate truncated and shifted version of the subalgebra.  The detection criterion involves a special type of \lq\lq kite map germ of size $\ell$\rq\rq based on a given flag of subspaces.  The general criterion which detects such nonvanishing characteristic cohomology is then given in terms of the defining germ $f_0$ containing such a kite map germ of size $\ell$.  Furthermore we use a restricted form of kite spaces to give a cohomological relation between the cohomology of local links and the global link.  
\end{abstract} 
\maketitle
\vspace{2ex}
\centerline{\bf Preliminary Version}
\vspace{1ex}
\par
\section*{Introduction}  
\label{S:sec0} 
\par
Let $\cV \subset M$ denote any of the varieties of singular $m \times m$ complex matrices which may be general, symmetric, or skew-symmetric ($m$ even), or $m  \times p$ matrices, in the corresponding space $M$ of such matrices.  A \lq\lq matrix singularity\rq\rq\, $\cV_0$ of \lq\lq type $\cV$\rq\rq\, for any of the $\cV \subset M$ is defined as $\cV_0 = f_0^{-1}(\cV)$ by a germ $f_0 : \C^n, 0 \to M, 0$ (which is transverse to $\cV$ in an appropriate sense).  In part I \cite{D6} we introduced the notion of characteristic cohomology for a singularity $\cV_0$ of type $\cV$ for any \lq\lq universal singularity\rq\rq  for the Milnor fiber (in case $\cV$ is a hypersurface) and for the complement and link (in the general case). In this paper we determine the characteristic cohomology for matrix singularities in all of these cases. 
\par
For matrix singularities the characteristic cohomology will give the analogue of characteristic classes for vector bundles (as e.g. \cite{MS}).  For comparison, a vector bundle $E \to X$ over CW complex $X$ is given by map $f_0 : X \to BG$ for $G$ the structure group of $E$ (e.g. $O_n$, $U_n$, $Sp_n$, $SO_n$, etc.).  It is well-defined up to isomorphism by the homotopy class of $f_0$.  Moreover the generators of $H^*(BG; R)$, for appropriate coefficient ring $R$ pull-back via $f_0^*$ to give the characteristic classes of $E$; so they generate a characteristic subalgebra of $H^*(X; R)$.  The nonvanishing of the characteristic classes which then give various properties of $E$.  Various polynomials in the classes correspond to Schubert cycles in the appropriate classifying spaces.  
\par
We will give analogous results for categories of matrix singularities of the various types.  Homotopy invariance is replaced by invariance under the actions of the groups of diffeomorphisms $\cK_H$ or $\cK_{\cV}$.  For these varieties we have shown in another paper \cite{D3} that they have compact \lq\lq model submanifolds\rq\rq\, for the homotopy types of both the Milnor fibers and the complements and these are classical symmetric spaces in the sense of Cartan.  As a result, it will follow that the characteristic subalgebra is the image of an exterior algebra (or in one case a module on two generators over an exterior algebra) on an explicit set of generators.  \par
We give a \lq\lq detection criterion\rq\rq\, for identifying in the characteristic sublgebra an exterior subalgebra on pull-backs of $\ell$ specific generators of the cohomology of the corresponding symmetric space.  It is detected by the defining germ $f_0$ containing a special type of \lq\lq unfurled kite map\rq\rq\, of size $\ell$.  This will be valid for the Milnor fiber, complement, and link.  
\par 
We will do this by using the support of appropriate exterior subalgebras of the Milnor fiber cohomology or of the complement cohomology for the varieties of singular matrices.  This is done using results of \cite{D4} giving the Schubert decomposition for the Milnor fiber and the complement to define \lq\lq vanishing compact models\rq\rq\, detecting these subalgebras.  In \S \ref{S:sec3} and \S \ref{S:sec8} we use the Schubert decompositions to exhibit vanishing compact models in the Milnor fibers and complements.  Then, we use the detection criterion introduced in Part I to give a criterion for detecting  nonvanishing exterior subalgebras of the characteristic cohomology using a class of \lq\lq unfurled kite maps\rq\rq.  Matrix singularities $\cV_0, 0$ defined by germs $f_0$ which contain such an \lq\lq unfurled kite map\rq\rq, are shown to have such subalgebras in their cohomology of Milnor fibers or complements and subgroups in their link cohomology.  In the case of general or skew-symmetric matrices, the results for the Milnor fibers and complements is valid for cohomology over $\Z$ (and hence any coefficient ring $R$); while for symmetric matrices, the results apply both for cohomology with coefficients in a field of characteristic zero or for $\Z/2\Z$-coefficients.  In all three cases for a field of characteristic zero, cohomology subgroups are detected for the links which are above the middle dimensions.  \par 
Furthermore, we extend in \S  \ref{S:sec8a} the results for complements and links for $m \times m$ matrices to general $m  \times p$ matrices.  This includes determining the form of the characteristic cohomology and giving a detection criterion using an appropriate form of kite spaces and mappings.  \par
A restricted form of the kite spaces serve a further purpose in \S \ref{S:sec8b} for identifying how the cohomology of local links of strata in the varieties of singular matrices relate to the cohomology of the global links. \par
\par 
\vspace{1ex}
\centerline{CONTENTS}
 \vspace{1ex}
\begin{enumerate}
\item	Matrix Equivalence for the Three Types of Matrix Singularities
\par 
\vspace{2ex} 
\par
\item	Cohomology of the Milnor Fibers of the $\cD_m^{(*)}$
\par 
\vspace{2ex} 
\par
\item	Kite Spaces of Matrices for Given Flag Structures 
\par
\vspace{2ex}
\par
\item	Detecting Characteristic Cohomology using Kite Maps of Matrices 
\par
\vspace{2ex}
\par
\item	Examples of Matrix Singularities Exhibiting Characteristic Cohomology
\par 
\vspace{2ex}
\par
\item	Characteristic Cohomology for the Complements and Links of Matrix Singularities
\par 
\vspace{2ex}
\par
\item	Characteristic Cohomology for Non-square Matrix Singularities
\par 
\vspace{2ex}
\par 
\item	Cohomological Relations between Local Links via Restricted Kite Spaces

\end{enumerate}
\vspace{3ex}
\section{Matrix Equivalence for the Three Types of Matrix Singularities} 
\label{S:sec2}
\par

We will apply the results in Part I \cite{D6} to the cohomology for a matrix singularity 
$\cV_0$ for any of the three types of matrices.  We let $M$ denote the space of $m \times m$ general matrices $M_m(\C)$, resp. symmetric matrices $Sym_m(\C)$, resp. skew-symmetric matrices $Sk_m(\C)$ (for $m$ even).  We also let $\cD_m^{(*)}$ denote the variety of singular matrices for each case with $(*)$ denoting $( )$ for general matrices, $(sy)$ for symmetric matrices, or $(sk)$ for skew-symmetric matrices.  For the case of $m \times p$ matrices with $m \not = p$ we use the notation $\cD{m, p} \subset M_{m, p}(\C)$. Also, the corresponding defining equations for the three $m \times m$ cases are given by: $\det$ for the general and symmetric cases and the Pfaffian $\Pf$ for the skew-symmetric case.  We generally denote the defining equation by $H : \C^N, 0 \to \C, 0$ for $\cV$, where $M \simeq \C^N$ for appropriate $N$ in each case and $\cV = \cD_m^{(*)}$.  For the case of $m \times p$ matrices with $m \not = p$, $\cD{m, p}$ is not a hypersurface  and we will not be concerned with its defining equation. 
\par
\subsection*{Abbreviated Notation for the Characteristic Cohomology} \hfill
\par 
For matrix singularities $\cV_0$ defined by $f_0 : \C^n \to M, 0 = \C^N, 0$, the characteristic cohomology with coefficients $R$ defined in Part I is denoted as follows: for the Milnor fiber (in the case $\cV$ is a hypersurface) by $\cA_{\cV}(f_0, R)$, for the complement of $\cV_0$ by $\cC_{\cV}(f_0, R)$, and for the link of $\cV_0$, for $\bk$ a field of characteristic $0$, by $\cB_{\cV}(f_0, \bk)$.  We use a simplified notation for matrix singularities.
\begin{align*}
\cA_m^{(*)}(f_0; R) \,\, = \,\, &\cA_{\cD_m^{(*)}}(f_0; R), \qquad  \cC^{(*)}(f_0; R) \,\, = \,\, \cC_{\cD_m^{(*)}}(f_0; R)\, , \\
 &\text{and} \quad \cB_m^{(*)}(f_0; \bk) \,\, = \,\, \cB_{\cD_m^{(*)}}(f_0; \bk)
\end{align*}  \par
If $m$ is understood, we shall suppress it in the notation. \par
For the case of $m \times p$ matrices with $m \not = p$, $\cD_{m, p}$ is not a hypersurface, but we shall use for the complement and link 
$$\cC_{m, p}(f_0; R) \,\, = \,\, \cC_{\cD_{m, p}}(f_0; R) 
 \quad \text{and} \quad \cB_{m, p}(f_0; \bk) \,\, = \,\, \cB_{\cD_{m, p}^{(*)}}(f_0; \bk)  $$
Again, if $(m, p)$ is understood, we shall suppress it in the notation.

\subsection*{Matrix Singularities Equivalences $\cK_M$ and $\cK_{HM}$} \hfill
\par
\par
There are several different equivalences that we shall consider for 
matrix singularities $f_0 : \C^n, 0 \to M, 0$ with $\cV$ denoting the 
subvariety of singular matrices in $M$.  The one used in classifications is 
$\cK_M$--{\it equivalence: }  We suppose that we are given an action of a 
group of matrices $G$ on $M$.  For symmetric or skew symmetric 
matrices, it is the action of $\GL_m(\C)$ by $B\cdot A =  B\, A\, B^T$.  For 
general $m \times p$ matrices, it is the action of $\GL_m(\C) \times 
\GL_p(\C)$ by $(B, C)\cdot A =  B\, A\, 
C^{-1}$.  Given such an action, then the group $\cK_M$ consists of pairs 
$(\varphi, B)$, with $\varphi$ a germ of a diffeomorphism of $\C^n, 0$ and 
$B$ a holomorphic germ $\C^n, 0 \to G, I$.  The action is given by
$$    f_0(x)  \mapsto  f_1(x)\,  = \, B(x)\cdot( f_0\circ \varphi^{-1}(x)) \, . $$
Although $\cK_M$ is a subgroup of $\cK_{\cV}$, they have the same tangent spaces and their path connected components of their orbits agree (for example this is explained in \cite[\S 2]{DP} because of the results due to J\'{o}zefiak \cite{J}, J\'{o}zefiak-Pragacz 
\cite{JP}, and Gulliksen-Neg\r{a}rd\cite{GN} as pointed out by Goryunov-Mond \cite{GM}).  \par 
We next restrict to codimension $1$ subgroups; let 
$$GL_m(\C)^{(2)} \, \, \overset{def}{=} \,\, \ker( \det \times \det: GL_m(\C) \times GL_m(\C) \to (\C^* \times \C^*)/\gD\C^*)$$ 
where $\gD\C^*$ is the diagonal subgroup.  We then replace the groups for 
$\cK_M$--equivalence by the subgroup $SL_m(\C)$ for the symmetric and 
skew-symmetric case and for the general case the subgroup $GL_m(\C)^{(2)}$.
These restricted versions of equivalence preserve the defining equation $H$ in each case.  We denote the resulting equivalence groups by $\cK_{HM}$, which are subgroups of the corresponding $\cK_{H}$.  As $\cK_{HM}$ equivalences preserve $H$, they also preserve the Milnor fibers and the varieties of singular matrices $\cV$.  
By the above referred to results, in each of the three cases, these $\cK_{HM}$ also have the same tangent spaces as $\cK_{H}$ in each case.  \par 
As a consequence of \cite[Prop. 2.1 and Prop. 2.2]{D6}, since $\cK_{HM}$ is a subgroup of $\cK_H$, we have for any coefficient ring $R$ and field $\bk$ of characteristic $0$ the following corollary.
\begin{Corollary}
\label{Cor2.3}
For each of the three cases of the varieties of $m \times m$ singular matrices $\cV =\cD_m^{(*)}$, let $\cV_0$ be defined by $f_0 : \C^n, 0 \to M, 0$, with $M$ denoting the corresponding space of matrices.  Then,\par 
\begin{itemize}
\item[a)]   the characteristic subalgebra $\cA^{(*)}(f_0; R)$ is, up to Milnor fiber cohomology isomorphism, an invariant of the $\cK_{HM}$--equivalence class of $f_0$; 
\item[b)]  $\cB^{(*)}(f_0; \bk)$ is, up an isomorphism of the cohomology of the link, an invariant of the $\cK_{M}$--equivalence class of $f_0$; and 
\item[c)] the characteristic subalgebra $\cC^{(*)}(f_0; R)$ is, up to an isomorphism of the cohomology of the complement, an invariant of the $\cK_{M}$--equivalence class of $f_0$.
\end{itemize}
\par 
Hence, the structure of the cohomology of the Milnor fiber of $\cV_0$ as a 
graded algebra (or graded module) over $\cA^{(*)}(f_0; R)$ is, up to isomorphism, independent of the $\cK_{HM}$--equivalence class of $f_0$
\end{Corollary}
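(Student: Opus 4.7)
The plan is to recognize this as an essentially immediate consequence of the two propositions cited from Part I, together with the subgroup inclusions $\cK_{HM} \subset \cK_H$ and $\cK_M \subset \cK_{\cV}$ that have been prepared in the preceding paragraphs. The guiding principle is that an invariant under a group is automatically an invariant under any subgroup; the bulk of the work is therefore bookkeeping about which group is acting on which object, not any new geometric or algebraic input.

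First I would invoke \cite[Prop. 2.1]{D6}, which gives invariance of the characteristic subalgebra $\cA_{\cV}(f_0; R)$ (up to Milnor fiber cohomology isomorphism) under the full group $\cK_H$ whenever $\cV$ is a hypersurface; and \cite[Prop. 2.2]{D6}, which gives the analogous invariance of $\cC_{\cV}(f_0; R)$ and $\cB_{\cV}(f_0; \bk)$ under $\cK_{\cV}$. The first applies in our setting because each of the three varieties $\cD_m^{(*)}$ is cut out by a single equation ($\det$ or $\Pf$), and the second applies without any hypersurface hypothesis.

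Next I would record the required inclusions. By construction a $\cK_{HM}$-equivalence has its matrix factor $B(x)$ valued in the restricted subgroup ($SL_m(\C)$ for the symmetric and skew-symmetric cases, $GL_m(\C)^{(2)}$ for the general case), and these subgroups were defined precisely so as to preserve the defining equation $H$. Hence every $\cK_{HM}$-equivalence preserves $H$ and its Milnor fibers, placing it inside $\cK_H$. Similarly every $\cK_M$-equivalence preserves the singular variety $\cV$ because the ambient matrix group $G$ does, so $\cK_M \subset \cK_{\cV}$. Parts (a), (b), (c) then fall out: a $\cK_{HM}$-equivalence between $f_0$ and $f_1$ is in particular a $\cK_H$-equivalence, so \cite[Prop. 2.1]{D6} supplies an isomorphism of Milnor fiber cohomology identifying $\cA^{(*)}(f_0; R)$ with $\cA^{(*)}(f_1; R)$, giving (a); the same argument with $\cK_M \subset \cK_{\cV}$ and \cite[Prop. 2.2]{D6} yields (b) and (c). The concluding sentence follows because the isomorphism produced in (a) is by construction an isomorphism of pairs (ambient cohomology together with distinguished characteristic subalgebra), so it respects the graded $\cA^{(*)}(f_0; R)$-algebra (respectively module) structure on the Milnor fiber cohomology.

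There is essentially no obstacle: all geometric content — the identification of $\cK_{HM}$ and $\cK_M$ as the correct subgroups preserving $H$ and $\cV$, and the key fact that these restricted groups share tangent spaces and orbit path-components with $\cK_H$ and $\cK_{\cV}$ — has already been set up. The one point to state carefully is that the isomorphisms furnished by Part I are isomorphisms of pairs rather than mere abstract cohomology isomorphisms, so that the subalgebra structure is genuinely transported; this, however, is built into the formulation of \cite[Prop. 2.1, Prop. 2.2]{D6}.
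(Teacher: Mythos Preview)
Your proposal is correct and matches the paper's approach exactly: the corollary is stated as an immediate consequence of \cite[Prop.~2.1 and Prop.~2.2]{D6} together with the subgroup inclusions $\cK_{HM} \subset \cK_H$ and $\cK_M \subset \cK_{\cV}$ established in the preceding discussion. The paper does not even give a separate proof environment for this corollary.
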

\par
Before considering the cohomology of the Milnor fibers of the $\cD_m{(*)}$, we first give an important property which implies that each of the $\cD_m^{(*)}$ are $H$-holonomic in the sense of \cite{D2}, which gives a geometric condition that assists in proving that the matrix singularity is finitely $\cK_{HM}$-determined (and hence finitely 
$\cK_H$-determined).  This will be a consequence of the fact that for all three cases the above groups act transitively on the strata of the canonical Whitney stratification of 
$\cD_m^{(*)}$.    
\par
\begin{Lemma}
\label{Lem2.4}
For each of the three cases of $m \times m$ general, symmetric and skew-symmetric matrices, the corresponding subgroups $GL_m(\C)^{(2)}$ , resp. $SL_m(\C)$ act transitively on the strata of the canonical Whitney stratification of $\cD_m^{(*)} $.
\end{Lemma}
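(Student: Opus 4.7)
The plan is to reduce the statement to the well-known transitivity of the \emph{full} matrix groups on the rank strata and then verify, by a direct block computation, that the stabilizer of a chosen representative is large enough to surject onto the one-dimensional quotient $G/G^{(*)}$. Recall that the canonical Whitney stratification of $\cD_m^{(*)}$ is given by the rank: the strata $S_r$ consist of all matrices of rank $r$, where $r \in \{0,1,\dots,m-1\}$ in the general and symmetric cases and $r \in \{0,2,4,\dots,m-2\}$ in the skew-symmetric case. Let $G$ denote the full group ($GL_m(\C)\times GL_m(\C)$ acting by $(B,C)\cdot A = BAC^{-1}$ in the general case, $GL_m(\C)$ acting by $B\cdot A = BAB^T$ in the other two). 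It is classical that $G$ acts transitively on each $S_r$: by Gaussian row/column reduction in the general case, by Sylvester's law of inertia over $\C$ in the symmetric case, and by the Darboux symplectic normal form in the skew-symmetric case.

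Let $G^{(*)}$ denote the restricted subgroup ($GL_m(\C)^{(2)}$ in the general case, $SL_m(\C)$ in the other two). In each case $G^{(*)}$ is normal in $G$ and the quotient $G/G^{(*)}$ is isomorphic to $\C^*$ via a determinant character $\chi$: $\chi(B,C)=\det(B)\det(C)^{-1}$ for the general case and $\chi(B)=\det(B)$ for the symmetric and skew-symmetric cases. Since $G$ is transitive on $S_r$, the orbit space $G^{(*)}\backslash S_r$ is in bijection with $\chi(\mathrm{Stab}_G(A_r))\backslash \C^*$ for any chosen $A_r\in S_r$; so it suffices to show that $\chi$ restricted to the stabilizer of some $A_r$ is surjective onto $\C^*$.

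I would take the standard block representatives
\[
A_r \,=\, \begin{pmatrix} I_r & 0 \\ 0 & 0 \end{pmatrix} \quad\text{(general and symmetric)},\qquad A_r \,=\, \begin{pmatrix} J_r & 0 \\ 0 & 0 \end{pmatrix} \quad\text{(skew-symmetric)},
\]
with $J_r$ the standard rank-$r$ symplectic form, and compute the stabilizer. Writing matrices in $(r,m-r)$ block form, the defining equation $BA_rC^{-1}=A_r$ (resp.\ $BA_rB^T=A_r$) forces the off-diagonal blocks $B_{21}$ (and $C_{12}$, in the general case) to vanish, imposes a constraint on the upper-left block $B_{11}$ (unrestricted in the general case, orthogonal in the symmetric case, symplectic in the skew-symmetric case), and leaves the lower-right block $B_{22}$ (and $C_{22}$ in the general case) completely free in $GL_{m-r}(\C)$. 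Since $r<m$ in all three cases (and $r\le m-2$ in the skew-symmetric case, so $B_{22}$ genuinely exists), $\det(B_{22})$ ranges over all of $\C^*$, and hence so does $\chi$ restricted to the stabilizer. Therefore $G = G^{(*)}\cdot \mathrm{Stab}_G(A_r)$, which is exactly the transitivity of $G^{(*)}$ on $S_r$.

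The only point requiring a little care — and the likely main obstacle to writing cleanly — is the skew-symmetric case, where one must observe that the upper-left constraint $B_{11}\in Sp_r(\C)$ automatically forces $\det(B_{11})=1$, so that the entire determinant of a stabilizer element is captured by the free block $\det(B_{22})$; once this is noted the surjectivity of $\chi$ onto $\C^*$ is immediate and the three cases assemble into a uniform argument.
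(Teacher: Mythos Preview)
Your argument is correct and complete; the coset identity $G = G^{(*)}\cdot\mathrm{Stab}_G(A_r)$ is exactly what is needed, and your block computation of the stabilizer is accurate in all three cases (including the observation that $\det(B_{11})=1$ for $B_{11}\in Sp_r(\C)$).

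The paper takes a more hands-on route: rather than invoking the full-group transitivity and then analysing the stabilizer, it starts from an arbitrary $A$ in the stratum and explicitly builds the change-of-basis matrices carrying $A$ to the normal form, then rescales a single basis vector lying in the kernel by a scalar chosen to force the determinant condition (e.g.\ replacing $\bv_m$ by $\frac{1}{b}\bv_m$ in the symmetric case to obtain $\det(B)=1$). The underlying freedom being exploited is the same as yours --- the kernel direction is unconstrained --- but the paper uses it constructively at the level of a single vector, while you use it structurally at the level of the whole $GL_{m-r}$ block in the stabilizer. Your approach has the advantage of separating the two ingredients cleanly (known transitivity of $G$; surjectivity of $\chi$ on the stabilizer) and handling all three cases uniformly via the character $\chi$, whereas the paper's approach avoids any abstract group-theoretic lemma and is entirely self-contained.
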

\par
\begin{proof}[Proof of Lemma \ref{Lem2.4}]
First, for the general case, let $A \in \cD_m$ have rank $r < m$.  We also denote the linear transformation on the space of column vectors defined by $A$ to be denoted by $L_A$.  Then, we let $\{\bv_1, \dots , \bv_m\}$ denote a basis for $\C^m$ so that 
$\{\bv_{r +1}, \dots , \bv_m\}$ is a basis for $\ker(L_A)$.  We also let $\{\bw_1, \dots , \bw_m\}$ denote a basis for $\C^m$ so that $\bw_j = L_A(\bv_j)$ for $j = 1, \dots, r$.  We let $b = \det(\bv_1 \dots  \bv_m)$ and $c = \det(\bw_1 \dots  \bw_m)$.  Then, we let $B^{-1} = (\bv_1, \dots , \bv_{m-1}, \frac{c}{b}\bv_m)$ and $C^{-1} = (\bw_1 \dots  \bw_m)$.  Then, $C\cdot A \cdot B^{-1} = \begin{pmatrix} I_r & 0  \\ 0 & 0 \end{pmatrix}$, where $I_r$ is the $r \times r$ identity matrix.  Also,$\det(B) = \det(C) = c$ so  $(B, C) \in GL_m(\C)^{(2)}$.  Thus, the each orbit of $GL_m(\C) \times GL_m(\C)$, which consists of matrices of given fixed rank $< m$ is a stratum of the canonical Whitney stratification, is also an orbit of $GL_m(\C)^{(2)}$.  \par
For both the symmetric and skew-symmetric cases the corresponding orbits under $GL_m(\C)$ consist of matrices of given symmetric or skew-symmetric type of fixed rank $< m$; and they form strata of the canonical Whitney stratification.  We show that they are also orbits under the action of $SL_m(\C)$.  \par
For $A \in \cD_m^{(sy)}$ of rank $r < m$, we consider the symmetric bilinear form 
$\psi(X, Y) = X^T\cdot A\cdot Y$ for column vectors in $\C^m$.  We can find a basis 
$\{\bv_1, \dots , \bv_m\}$ for $\C^m$ so that $\psi(\bv_i, \bv_i) =  1$ for $i = 1, \dots , r$, $= 0$ for $i > r$, and $\psi(\bv_i, \bv_j) =  0$ if $i \neq j$.  Then, let 
$b = \det(\bv_1 \dots  \bv_m)$,  we let 
$B^T = (\bv_1, \dots , \bv_{m-1}, \frac{1}{b}\bv_m)$.  Then $\det(B) = 1$ and 
$B \cdot A\cdot B^T = \begin{pmatrix} I_r & 0  \\ 0 & 0 \end{pmatrix}$. \par
Lastly, for the skew-symmetric case the argument is similar, except for $A \in \cD_m^{(sk)}$ of rank $r < m$, we consider the skew-symmetric bilinear form 
$\psi(X, Y) = X^T\cdot A\cdot Y$ for column vectors in $\C^m$ with even $m$ and $r= 2k$.  There is a basis $\{\bv_1, \dots , \bv_m\}$ for $\C^m$ so that $\psi(\bv_{2i-1}, \bv_{2i}) =  1$ for $i = 1, \dots , k$, and otherwise, $\psi(\bv_i, \bv_j) =  0$ for $i < j$.  Then, let $b = \det(\bv_1 \dots  \bv_m)$,  we let $B^T = (\bv_1, \dots , \bv_{m-1}, \frac{1}{b}\bv_m)$.  Then $\det(B) = 1$ and $B \cdot A\cdot B^T = \begin{pmatrix} J_k & 0  \\ 0 & 0 \end{pmatrix}$, where $J_k$ is the $r \times r$ block diagonal matrix with $k$ $2 \times 2$-blocks of $J_1 = \begin{pmatrix} 0 & 1  \\ -1 & 0 \end{pmatrix}$.
\end{proof} 
\par
\section{Cohomology of the Milnor Fibers of the $\cD_m^{(*)}$}
\label{S:sec3}
\par
We next recall results from \cite{D3} and \cite{D4} giving the cohomology structure 
of the Milnor fibers of the $\cD_m^{(*)}$ for each of the three types of matrices.  This includes: representing the Milnor fibers by global Milnor fibers, giving compact symmetric spaces as compact models for the homotopy types of the global Milnor fibers, giving the resulting cohomology for the symmetric spaces, geometrically representing the cohomology classes, and indicating the relation of the cohomology classes for different $m$. \par
\subsection*{Homotopy Type of Global Milnor fibers via Symmetric 
Spaces} \hfill 
\par 
The global Milnor fibers for each of the three cases, which we denote by , $F_m$, resp. $F_m^{(sy)}$, resp. $F_m^{(sk)}$, are given by $H^{-1}(1)$ for $H : M, 0 \to \C, 0$ the defining equation for $\cD_m^{(*)}$, which is $\det$ for the general of symmetric case and Pfaffian $\Pf$ for the skew-symmetric case.  As shown in \cite{D3} the Milnor fiber for the germ of $H$ at $0$ is diffeomorphic to the global Milnor fiber.   We reproduce here the representation of the global Milnor fibers as a homogeneous spaces, whose  homotopy types are given by symmetric spaces.  These provide compact models for the Milnor fibers diffeomorphic to their Cartan models as given by \cite[Table 1]{D4}.  these are given in Table \ref{Table1}.
\vspace{1ex}
\begin{table}[h]
\begin{tabular}{|l|c|c|c|l|}
\hline
Milnor   & Quotient  & Symmetric  & Compact Model & Cartan \\
 Fiber $F_m^{(*)}$  &  Space  &  Space  & $F_m^{(*)\, c}$ &  Model   \\
\hline
$F_m$  & $SL_m(\C)$ & $SU_m$  &  $SU_m$  & $F_m^{c}$ \\
\hline
$F_m^{(sy)}$  &  $SL_m(\C)/SO_m(\C)$  & $SU_m/SO_m$  &  $SU_m \cap 
Sym_{m}(\C)$  &  $F_m^{(sy)\, c}$ \\
\hline
$F_m^{(sk)}, m = 2n$ &  $SL_{2n}(\C)/Sp_{n}(\C)$  & $SU_{2n}/Sp_n$  &  
$SU_{m} \cap Sk_{m}(\C)$  & $F_{m}^{(sk)\, c}\cdot J_n^{-1}$  \\
\hline
\end{tabular}
\caption{Global Milnor fiber, its representation as a homogenenous space, 
compact model as a symmetric space, compact model as subspace and 
Cartan model.}
\label{Table1}
\end{table}
\par
\subsection*{Tower Structures of Global Milnor fibers and Symmetric 
Spaces by Inclusion} \hfill 
\par 
The global Milnor fibers for all cases, their symmetric spaces, and their compact models 
form towers via inclusions.  These are given as follows.  For the general and symmetric cases, there is the homomorphism $\tilde{\itj}_m : SL_m(\C) \hookrightarrow SL_{m+1} (\C)$ sending $A \mapsto \begin{pmatrix} A & 0 \\ 0 
& 1 \end{pmatrix}$.  This can be identified with the inclusion of Milnor fibers 
$\tilde{\itj}_m :F_m \subset F_{m+1}$.  Also, it restricts to give an inclusion 
$\tilde{\itj}_m : SU_m \hookrightarrow SU_{m+1}$ which are the compact models for the general case.  Second, it induces an inclusion $\tilde{\itj}^{(sy)}_m : SL_m(\C)/SO_m(\C) \hookrightarrow SL_{m+1} (\C)/SO_ {m+1}(\C)$ which is an inclusion of Milnor fibers $\tilde{\itj}^{(sy)}_m: F_m^{(sy)} \hookrightarrow F_{m+1}^{(sy)}$.  It also induces an inclusion of the compact homotopy models $\tilde{\itj}^{(sy)}_m : SU_m/SO_m(\R) \subset SU_{m+1}/SO_ {m+1}(\R)$ for the Milnor fibers.
\par 
For the skew symmetric case, the situation is slightly more subtle. 
First, the composition of two of the above successive inclusion homomorphisms for $SL_m(\C)$ gives a homomorphism $SL_m(\C) \hookrightarrow SL_{m+2} (\C)$ sending $A \mapsto \begin{pmatrix} A & 0 \\ 0 & I_2 \end{pmatrix}$ for the $2 \times 2$ identity matrix $I_2$.  For even $m = 2k$, it induces an inclusion $\tilde{\itj}^{(sk)}_m: SL_m(\C)/Sp_k(\C) \hookrightarrow SL_{m+2} (\C)/Sp_ {k+1}(\C)$.  However, the inclusion of Milnor fibers $ \tilde{\itj}^{(sk)}_m : F_m^{(sk)} \hookrightarrow F_{m+2}^{(sk)}$ is given by the map sending
 $A \mapsto \begin{pmatrix} A & 0 \\ 0 & J_1 \end{pmatrix}$ for the $2 \times 2$ 
skew-symmetric matrix $J_1 = \begin{pmatrix} 0 & 1 \\ -1 & 0 \end{pmatrix}$.  These two inclusions are related via the action of $SL_m(\C)/Sp_k(\C)$ on $F_m^{(sk)}$ which induces a diffeomorphism given by $B  \mapsto B\cdot J_k \cdot B^T$ ($m = 2k$).  
This also induces an inclusion of compact homotopy models  
$SU_m \cap Sk_m(\C) \subset SU_{m+2} \cap Sk_ {m+2}(\C)$.  This inclusion commutes with both the inclusion of the Milnor fibers under the diffeomorphism given in \cite{D3} by the action, and the inclusion of the Cartan models induced from the compact models after multiplying by $J_k^{-1}$, see Table \ref{Table1}.  
The Schubert decompositions for all three cases given in \cite{D4} satisfy the additional property that they respect the inclusions.  \par 
\subsection*{Cohomology of Global Milnor fibers using Symmetric Spaces} \hfill 
\par 
Next, we recall the form of the cohomology algebras for the global Milnor fibers. 
First, for the $m \times m$ matrices for the general case or skew-symmetric case (with $m = 2n$), by Theorems \cite[Thm. 6.1]{D4} and \cite[Thm. 6.14]{D4}, the Milnor fiber cohomology with coefficients $R = \Z$ are given as follows. 
\begin{align}
\label{Eqn4.2}
  H^*(F_m; \Z) \,\,  &\simeq \,\, 
\gL^*\Z \langle e_3, e_5, \dots , e_{2m-1} \rangle \,  \quad \text{general case }  \\
H^*(F_m^{(sk)}; \Z) \,\,  &\simeq \,\, \gL^*\Z \langle e_5, e_9, \dots , e_{4n-3} \rangle \quad \text{skew-symmetric case ($m = 2n$)} \, .
\end{align}
Therefore these isomorphisms continue to hold with $\Z$ replaced by any coefficient ring $R$.  Thus, for any coefficient ring $R$, $\cA^{(*)}(f_0; R)$ is the quotient ring of a 
free exterior $R$-algebra on generators $e_{2j-1}$, for $j = 2, 3, \dots , m$, 
resp. $e_{4j-3}$ for $j = 2, 3, \dots , n$.  
\par
For the $m \times m$ symmetric case there are two important cases where 
either $R = \Z/2\Z$ or $R = \bk$, a field of characteristic zero.  
First, for the coefficient ring $R = \bk$, the 
symmetric case breaks-up into two cases depending on whether $m$ is even 
or odd (see \cite[Thm. 6.7 (2), Chap. 3]{MT} or Table 1 of \cite{D3}).  
\begin{equation}
\label{Eqn4.3}
  H^*(F_m^{(sy)}; \bk) \,\,  \simeq \,\, \begin{cases}
\gL^*\bk \langle e_5, e_9, \dots , e_{2m-1} \rangle \,  & \text{ if $m = 
2k+1$ }  \\
\gL^*\bk \langle e_5, e_9, \dots , e_{2m-3} \rangle \{1, e_m\}& \text{if 
$m = 2k$ } \, .
\end{cases}
\end{equation}
Here $e_m$ is the Euler class of a $m$-dimensional real oriented vector bundle 
$\tilde{E}_m$ on the Milnor fiber $F_m^{(sy)}$. The vector bundle $\tilde{E}_m$ on the symmetric space $SU_m/SO_m(\R)$ has the form 
$SU_m \times_{SO_m(\R)} \R^m \to SU_m/SO_m(\R)$ where the action of $SO_m(\R)$ is given by the standard representation.  This can be described as the {\em bundle of totally real subspaces} of $\C^m$, which is the bundle of 
$m$-dimensional real subspaces of $\C^m$ whose complexifications are $\C^m$.  \par
In the second case for $R = \Z/2\Z$, by 
Theorem \cite[Thm. 6.15]{D4} using \cite[Thm. 6.7 (3), Chap. 3]{MT},  we have
\begin{equation}
\label{Eqn4.4}
  H^*(F_m^{(sy)}; \Z/2\Z) \,\,  \simeq \,\, 
\gL^*\Z/2\Z \langle e_2, e_3, \dots ,e_m \rangle \,   
\end{equation}
for generators $e_j = w_j(\tilde{E}_m)$, for $j = 2, 3, \dots , m$, 
for $w_j(\tilde{E}_m)$ the $j$-th Stiefel-Whitney class of the real oriented 
$m$-dimensional vector bundle $\tilde{E}_m$ above.  
\par
We summarize the structure of the characteristic subalgebra $\cA^{(*)}(f_0; R)$ in each case with the following.
\begin{Thm}
\label{Thm4.5}
Let $f_0 : \C^n, 0 \to M, 0$ define a matrix singularity $\cV_0, 0$
for $M$ the space of $m \times m$ matrices which are either general, symmetric, or 
skew-symmetric (with $m = 2n$). 
\begin{itemize}
\item[i)]  In the general and skew-symmetric cases, $\cA^{(*)}(f_0; R)$ is a quotient of the free $R$-exterior algebra with generators given in \eqref{Eqn4.2}
\item[ii)]  In the symmetric case with $R = \Z/2\Z$, $\cA^{(sy)}(f_0; \Z/2\Z)$ 
is the quotient of the free exterior algebra over $\Z/2\Z$ on generators $e_j = w_j(\tilde{E}_m)$, for $j = 2, 3, \dots , m$, for $w_j(\tilde{E}_m)$ the Stiefel-Whitney classes of the real oriented $m$-dimensional vector bundle $\tilde{E}_m$ on the Milnor fiber of $\cD_m^{(sy)}$.  Hence, $\cA^{(*)}(f_0; \Z/2\Z)$ is a subalgebra generated by the Stiefel-Whitney classes of the pull-back vector bundle $f_{0, w}^*(\tilde{E}_m)$ on 
$\cV_w$. 
\item[iii)]  In the symmetric case with $R = \bk$, a field of characteristic 
zero, $\cA^{(sy)}(f_0; \bk)$ is a quotient of the $\bk$-algebras in each of the cases 
in \eqref{Eqn4.3}.
\end{itemize}
Then, in each of these cases, the cohomology (with coefficients in a ring $R$) of 
the Milnor fiber of $\cV_0$ has a graded module structure over the characteristic 
subalgebra $\cA^{(*)}(f_0; R)$ of $f_0$.
\end{Thm}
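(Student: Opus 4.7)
The plan is to deduce Theorem \ref{Thm4.5} as a direct consequence of the Milnor-fiber cohomology computations \eqref{Eqn4.2}--\eqref{Eqn4.4} together with the definition of the characteristic subalgebra $\cA^{(*)}(f_0;R)$ recalled from Part I. By construction that subalgebra is the image of the ring homomorphism induced on cohomology by the natural map of Milnor fibers associated to $f_0$; this map exists because $f_0$ is transverse to $\cV$, so the local Milnor fiber of $H \circ f_0$ is literally a pullback of a Milnor fiber of $H$. Consequently $\cA^{(*)}(f_0;R)$ is automatically a ring-theoretic quotient of $H^*(F_m^{(*)};R)$, and parts (i)--(iii) will follow by substituting the known algebra descriptions for the latter.

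First I would dispatch part (i): equation \eqref{Eqn4.2} identifies $H^*(F_m;\Z)$ and $H^*(F_m^{(sk)};\Z)$ as free exterior $\Z$-algebras on the listed odd-dimensional generators; because the cohomology is finitely generated and torsion-free in each degree, the universal coefficient theorem promotes this description to any coefficient ring $R$, whence the image subalgebra is a quotient of a free exterior $R$-algebra, as claimed. Part (iii) is analogous using \eqref{Eqn4.3}, where the even case $m=2k$ accounts for the module-over-exterior-algebra description because of the extra Euler-class generator $e_m$. Part (ii) is the only case requiring more than formal substitution: there the generators are $e_j = w_j(\tilde E_m)$, and by naturality of Stiefel-Whitney classes under pullback the induced homomorphism on cohomology sends $w_j(\tilde E_m)$ to $w_j(f_{0,w}^*(\tilde E_m))$ on $\cV_w$. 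Hence the image is precisely the $\Z/2\Z$-subalgebra generated by the Stiefel-Whitney classes of the pulled-back bundle.

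The final module-structure assertion is then purely formal: any graded subalgebra of a graded ring automatically makes the ring into a graded module over itself by restriction of its own multiplication. The main obstacle I anticipate is not algebraic but a piece of naturality bookkeeping: one must check, from the Part I definition of $\cA^{(*)}(f_0;R)$ together with the geometric identification of $\tilde E_m$ as the bundle of totally real subspaces, that the pullback really does take the generators $e_j$ of $H^*(F_m^{(*)};R)$ to the classes described in each case, and in particular that $f_{0,w}^*(\tilde E_m)$ is the canonical totally-real bundle on $\cV_w$ determined by $f_0$. Once that identification is confirmed, the proof reduces to reading off quotients of the already-known algebras.
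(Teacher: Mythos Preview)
Your proposal is correct and matches the paper's approach: the paper does not give a separate proof of Theorem \ref{Thm4.5} but explicitly presents it as a summary (``We summarize the structure of the characteristic subalgebra $\cA^{(*)}(f_0; R)$ in each case with the following''), drawing the conclusion directly from the definition of $\cA^{(*)}(f_0;R)$ as the image of $f_{0,w}^*$ together with the cohomology computations \eqref{Eqn4.2}--\eqref{Eqn4.4}. Your only overcautious step is the final ``naturality bookkeeping'' worry: once you know the generators $e_j$ in \eqref{Eqn4.4} \emph{are} the classes $w_j(\tilde E_m)$, naturality of Stiefel--Whitney classes under pullback immediately gives $f_{0,w}^*(e_j) = w_j(f_{0,w}^*\tilde E_m)$, with no further identification of $f_{0,w}^*\tilde E_m$ as a ``canonical'' bundle required for the statement as written.
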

\par
\subsection*{Cohomology Relations Under Inclusions for Varying $m$} \hfill
\par 
We give the relations between the cohomology of the global Milnor fibers and the symmetric spaces for varying $m$ under the induced inclusion mappings.  
The relations are the following.
\begin{Proposition}
\label{Prop4.5}
\begin{itemize}
\item[1)]  In the {\em general case}, for the inclusions $\tilde{\itj}_{m-1} : SU_{m-1} \hookrightarrow SU_{m}$ and $\tilde{\itj}_{m-1} : F_{m-1} \subset F_m$, 
$\tilde{\itj}_{m-1}^*$ is an isomorphism on the subalgebra generated by $\{ e_{2i-1} : i = 2, \dots , m-1\}$ and  $\tilde{\itj}_{m-1}^*(e_{2m-1}) = 0$. 
\item[2)] In the {\em skew-symmetric case (with $m = 2n$)}, for the inclusions 
$\itj^{(sk)}_{m-2} : SU_{2(n-1)}/Sp_{n-1} \hookrightarrow SU_{2n}/Sp_n$ and for Milnor fibers $\tilde{\itj}^{(sk)}_{m-2} : F_{m-2}^{(sk)} \hookrightarrow F_{m}^{(sk)}$, 
$\tilde{\itj} ^{(sk)\, *}_{m-2}$ is an isomorphism on the subalgebra generated by 
$\{ e_{4i-3} : i = 2, \dots , m-1\}$ and  $\tilde{\itj} ^{(sk)\, *}_{m-2}(e_{4m-3}) = 0$.
\item[3)] In the {\em symmetric case}, for the inclusion $\tilde{\itj}^{(sy)}_{m-1} : SU_{m-1}/SO_{m-1}(\R) \hookrightarrow SU_{m} /SO_{m}(\R)$ and for Milnor fibers  
$\tilde{\itj}^{(sy)}_{m-1}: F_{m-1}^{(sy)} \subset F_m^{(sy)}$: \par
\begin{itemize}
\item[i)]  for coefficients $R = \Z/2\Z$, $\tilde{\itj}^{(sy)\, *}_{m-1}$ is an isomorphism on the subalgebra generated by $\{ e_i : i = 2, \dots , m-1\}$ and  
$\tilde{\itj}^{(sy)\, *}_{m-1}(e_m) = 0$;
\item[iia)]  for coefficients $R = \bk$, a field of characteristic $0$, if $m = 2k$, then 
$\tilde{\itj}^{(sy)\, *}_{m-1}$ is an isomorphism on the subalgebra generated by $\{ e_{4i-3} : i = 2, \dots , k\}$, and $\tilde{\itj}^{(sy)\, *}_{m-1}(e_m) = 0$, and 
\item[iib)] if $m = 2k+1$, then $\tilde{\itj}^{(sy)\, *}_{m-1}$ is an isomorphism on the subalgebra generated by 
$\{ e_{4i-3} : i = 2, \dots , k\}$, and $\tilde{\itj}^{(sy)\, *}_{m-1}(e_{2m-1}) = 0$.
\end{itemize}
\end{itemize}
\end{Proposition}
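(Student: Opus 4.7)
The plan is to argue case by case, using in each case the specific geometric realization of each generator together with the fact (already invoked in the paper) that the Schubert decompositions of \cite{D4} respect the tower of inclusions. For the general case (1), I would use the principal fibration $SU_{m-1} \hookrightarrow SU_m \to S^{2m-1}$ realized by sending a matrix to its last column; under $\tilde{\itj}_{m-1}$ the image sits over the basepoint $(0,\ldots,0,1)$, so $SU_{m-1}$ is literally the fibre. Because $H^*(SU_m;\Z)$ is the free exterior algebra on $e_3,\ldots,e_{2m-1}$ with graded dimensions matching $H^*(S^{2m-1}) \otimes H^*(SU_{m-1})$, the Serre spectral sequence collapses at $E_2$ for dimensional reasons. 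This forces $e_{2m-1}$ to equal the pullback of the fundamental class of $S^{2m-1}$, hence to restrict to zero on the fibre, while each $e_{2i-1}$ for $i < m$ lifts from the fibre and therefore restricts to the corresponding primitive generator of $SU_{m-1}$.

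For the symmetric case (3), the bundle $\tilde E_m$ of totally real subspaces provides the most direct tool. At a point $A \cdot SO_m$ the fibre of $\tilde E_m$ is $A(\R^m)$, so along $\tilde{\itj}^{(sy)}_{m-1}(A) = \begin{pmatrix} A & 0 \\ 0 & 1 \end{pmatrix}$ the pullback is $A(\R^{m-1}) \oplus \R$, giving a natural splitting $\tilde{\itj}^{(sy)\,*}_{m-1}\tilde E_m \cong \tilde E_{m-1} \oplus \underline{\R}$. With $R = \Z/2\Z$ this immediately yields $w_j(\tilde E_m) \mapsto w_j(\tilde E_{m-1})$ for $j \le m-1$ and $w_m \mapsto 0$, proving (3i); with $R = \bk$ and $m = 2k$ it kills the Euler class $e_m$, giving (3iia). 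For the remaining isomorphism statements on the subalgebras generated by the $e_{4i-3}$, and for the vanishing of $e_{2m-1}$ in (3iib), I would invoke the Schubert decomposition of \cite{D4}: the $e_{4i-3}$ are dual to Schubert cells lying in the image of $\tilde{\itj}^{(sy)}_{m-1}$, whereas the top cell corresponding to $e_{2m-1}$ (present only when $m$ is odd) lies outside this image.

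The skew-symmetric case (2) is handled identically after passing to the Cartan model via the diffeomorphism $B \mapsto B \cdot J_k \cdot B^T$ from \cite{D3}, which converts the Milnor-fibre inclusion (with extra block $J_1$) into the symmetric-space inclusion $SU_{2(n-1)}/Sp_{n-1} \hookrightarrow SU_{2n}/Sp_n$; the Schubert cells dual to $e_{4i-3}$ for $i < n$ persist under this inclusion, while the cell dual to $e_{4n-3}$ does not. The principal obstacle will be (3iib): the generator $e_{2m-1}$ of $SU_m/SO_m$ for $m$ odd is not realized as a standard characteristic class of $\tilde E_m$, so its vanishing under restriction genuinely requires the Schubert-cell incidence analysis in $SU_m/SO_m$, and verifying that the relevant top Schubert cell is disjoint from the image of the smaller symmetric space is where the real work of the proof resides.
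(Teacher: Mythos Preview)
Your arguments for (1), (2), and (3i) are correct and in several places more concrete than the paper's. The paper handles (1) and (2) purely by citing that the Schubert decompositions of \cite{D4} for the Cartan models are compatible with the tower of inclusions, so that the Kronecker duals of the Schubert cycles map as claimed; your Serre spectral sequence argument for $SU_{m-1}\hookrightarrow SU_m\to S^{2m-1}$ is an equally standard and arguably more self-contained route to (1). Likewise, your bundle-splitting observation $\tilde{\itj}^{(sy)\,*}_{m-1}\tilde E_m\cong \tilde E_{m-1}\oplus\underline{\R}$ is a clean direct proof of (3i) and of the vanishing of the Euler class in (3iia), whereas the paper again just invokes the Schubert decomposition over $\Z/2\Z$.

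The genuine gap is in your treatment of the characteristic-zero symmetric case, and the paper is explicit about exactly this point. You propose to handle the $e_{4i-3}$ isomorphism statements and the vanishing of $e_{2m-1}$ in (3iib) by appealing to the Schubert decomposition of \cite{D4} for $SU_m/SO_m$. But that decomposition only yields a cell basis for homology with $\Z/2\Z$ coefficients; the Schubert cycles in $SU_m/SO_m$ carry $\Z/2\Z$ fundamental classes (dual to the Stiefel--Whitney classes $e_2,\dots,e_m$), not integral or rational ones, so they do not represent or detect the characteristic-zero generators $e_{4i-3}$ or $e_{2m-1}$. In particular, your proposed incidence argument---``the top Schubert cell corresponding to $e_{2m-1}$ lies outside the image''---has no force over $\bk$, because there is no such cell dual to $e_{2m-1}$ in that decomposition. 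The paper acknowledges this explicitly: ``The remaining symmetric case for coefficients $\bk$ a field of characteristic $0$ does not follow in \cite{D4} from the Schubert decomposition. Instead, the computation of the cohomology of the symmetric space given in \cite[Chap.~3]{MT} yields the result.'' So for (3iia) beyond the Euler class and for all of (3iib) you need a different tool---either the algebraic computations of \cite{MT} (or \cite{Bo}) for the map $H^*(SU_m/SO_m;\bk)\to H^*(SU_{m-1}/SO_{m-1};\bk)$, or an independent fibration/transgression argument adapted to these symmetric spaces over $\bk$.
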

\begin{proof}
For the general and skew-symmetric cases, the Schubert decomposition for the Cartan models $\cC_m$ and $\cC_m^{(sk)}$ for successive $m$ given in \cite{D4} preserves the inclusions and the homology properties.  In these two cases the result follows from the resulting identified Kronecker dual cohomology classes \cite[\S 6]{D4}.  \par
For the symmetric case and for $\Z/2\Z$-coefficients, an analogous Schubert decomposition gives the corresponding result.  The remaining symmetric case for coefficients $\bk$ a field of characteristic $0$ does not follow in \cite{D4} from the Schubert decomposition.  Instead, the computation of the cohomology of the symmetric space given in \cite[Chap. 3]{MT} yields the result.  In fact the algebraic computations in \cite [Chap. 3]{MT} (see also \cite{Bo}) also give the results for the other cases.  
\end{proof}
 \par
\subsection*{Vanishing Compact Models for the Milnor Fibers of $\cD_m^{(*)}$} \hfill
\par 
In part I we gave a detection criterion \cite[Lemma 3.2]{D6} for detecting the nonvanishing of a subgroup $E$ of the characteristic cohomology of the Milnor fiber 
$\cA^{(*)}(f_0, R)$.  We did so using vanishing compact models for the Milnor fiber.  We  use the preceding compact models for the Milnor fibers to give vanishing compact models for detecting nonvanishing subalgebras of $\cA^{(*)}(f_0, R)$.  From the above, let $F_M^{(*), c}$ denote the compact models for the individual global Milnor fibers $F_M^{(*)}$. 
We define $\Phi : F_m^{(*), c} \times (0, 1] \to H^{-1}((0, 1])$ sending 
$\Phi(A, t)= t\cdot A$.  Also, let $E = \gL^* R\{e_{i_1}, \dots, e_{i_\ell}\}$ denote the exterior subalgebra of $H^*(F_m^{(*), c}; R)$ on generators of the $\ell$ lowest degrees. We also let $\gl_E : F_{\ell}^{(*), c} \to F_m^{(*), c}$ denote the compositions 
$\tilde{\itj}^{(*)}_{m-1}\circ \cdots \circ \tilde{\itj}^{(*)}_{\ell}$.  Then, by Proposition \ref{Prop4.5}, $\gl_E^*$ induces an isomorphism from $E$ to its image.  Our goal is to first show that an appropriate restriction of $\Phi$ to a subinterval $(0, \gd)$ will provide a vanishing compact model for $F_M^{(*)}$; and moreover, we will use $\gl_E^*$ to give a germ which detects $E$.  
First, we give vanishing compact models for each case as follows.
\begin{Proposition}
\label{Prop4.6}   
A vanishing compact model for the Milnor fiber for $\cD_M^{(*)}$ is given for sufficiently small $0 < \gd << \gevar$ by $\Phi : F_m^{(*), c} \times (0, \gd) \to H^{-1}((0, \gevar])$ sending 
$\Phi(A, t)= t\cdot A$.  
\end{Proposition}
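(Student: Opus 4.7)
The plan is to exploit the homogeneity of the defining equation $H$. For the general and symmetric cases $H = \det$ is homogeneous of degree $m$; for the skew-symmetric case $H = \Pf$ is homogeneous of degree $n = m/2$. Write $d$ for this common degree. For any $A \in F_m^{(*),c} \subset F_m^{(*)} = H^{-1}(1)$ and any $t > 0$, homogeneity gives $H(t \cdot A) = t^d H(A) = t^d$, so $\Phi(A,t) \in H^{-1}(t^d)$. Taking $\gd$ small enough that $\gd^d \leq \gevar$, and (if one works inside a Milnor ball around $0$) also small enough that $\gd \cdot R \leq \gevar$ where $R = \max\{\|A\| : A \in F_m^{(*),c}\}$ is finite by compactness, the map $\Phi$ is well defined into $H^{-1}((0, \gevar])$.

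Next I would verify that $\Phi$ is a smooth embedding. For injectivity: if $\Phi(A_1, t_1) = \Phi(A_2, t_2)$, applying $H$ yields $t_1^d = t_2^d$ and hence $t_1 = t_2$ (both positive), whence $A_1 = A_2$. For the non-singularity of $d\Phi$: the tangent $\partial/\partial t$ maps to $A \neq 0$, which is transverse to $T_{tA} H^{-1}(t^d)$ because $\frac{d}{ds}\bigl|_{s=1} H(s \cdot tA) = d \cdot t^d \neq 0$; along the other factor, scaling by $t$ is a diffeomorphism of $F_m^{(*),c}$ onto $t \cdot F_m^{(*),c} \subset H^{-1}(t^d)$. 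Thus the image of $\Phi$ is the union $\bigcup_{t \in (0, \gd)} t \cdot F_m^{(*),c}$, foliated by the level sets of $H$, and each slice is diffeomorphic to the compact model $F_m^{(*),c}$ for the Milnor fiber homotopy type established in \cite{D3}.

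Finally I would confirm the two defining features of a \emph{vanishing} compact model in the sense of \cite[Lemma 3.2]{D6}. The radial retraction $(A, t) \mapsto (A, t_0)$ for any fixed $t_0 \in (0, \gd)$ gives a deformation retract of $F_m^{(*),c} \times (0, \gd)$ onto $F_m^{(*),c} \times \{t_0\}$, so $\Phi$ induces an isomorphism on cohomology between its image and the single slice $t_0 \cdot F_m^{(*),c}$, which by \cite{D3} realizes the Milnor-fiber homotopy type. Moreover, as $t \to 0^+$ we have $\|\Phi(A, t)\| \leq t \cdot R \to 0$ uniformly in $A$, so the family degenerates to the singular point $0 \in \cV$; this is exactly the vanishing property. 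The main technical point is matching the construction to the precise formal definition from Part I, but once the homogeneity of $H$ and the compactness of $F_m^{(*),c}$ are in hand, both the cohomology-isomorphism and the vanishing conditions reduce to these two ingredients together with the identification in this homogeneous setting of the local and global Milnor fibers by radial scaling.
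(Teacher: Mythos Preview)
Your proposal is correct and follows essentially the same route as the paper: homogeneity of $H$ to land $t\cdot A$ in the fiber $H^{-1}(t^d)$, compactness of $F_m^{(*),c}$ (the paper observes explicitly that $\|A\|=\sqrt{m}$ for $A\in SU_m$) to fit the scaled model inside a small ball, and the identification of local with global Milnor fibers. The one place the paper is more precise is the last step: rather than ``radial scaling,'' it invokes \cite[Lemma 1.2]{D3} (transversality of $aF_m^{(*)}$ to large spheres) to conclude that $aF_m^{(*),c}\subset aF_m^{(*)}\cap B_\delta \subset aF_m^{(*)}$ is a chain of homotopy equivalences, which is exactly the point you flag as ``the main technical point.''
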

\begin{proof}
We begin by first making a few observations about the global Milnor fibers.  For $M$ one of the spaces of $m \times m$ matrices, we consider $H : M, 0 \to \C, 0$ the defining equation for $\cD_m^{(*)}$ ($H = \det$ or $\Pf$).  Then, the global Milnor fiber is $F_m^{(*)} = H^{-1}(1)$.  Now we can consider multiplication in $M$ by a constant $a \neq 0$.  As $H$ is homogeneous, if $A \in F_m^{(*)}$, then $a\cdot A \in H^{-1}(a^m)$ in the general or symmetric cases, or in the skew-symmetric cases 
$H^{-1}(a^k)$ where $m = 2k$.  \par
We also observe that multiplication by $a$ is a diffeomorphism between these two Milnor fibers.  We denote the image of $F_m^{(*)}$ by multiplication by $a$ by $aF_m^{(*)}$.  Then, by e.g. the proof of \cite[Lemma 1.2]{D3}, given $\gd > 0$, there is an $a > 0$ so that $aF_m^{(*)} \cap B_{\gd}$ is the local Milnor fiber of $\cV_0$, $aF_m^{(*)}$ is transverse to  the spheres of radii $\geq \gd$, and $aF_m^{(*)} \cap B_{\gd} \subset aF_m^{(*)}$ is a homotopy equivalence.  \par
Also, we have the compact homotopy models which occur as submanifolds of $SU_m$ of the form $SU_m$ for the general case, resp. $SU_m \cap Sym_m(\C)$ for the symmetric case, resp. $SU_m \cap Sk_m(\C)$  for the skew-symmetric case.  Now, for the standard Euclidean norm on $M_n(\C)$, $\| A\| = \sqrt{m}$ for $A \in SU_m$.  Then, as well this holds for $SU_m \cap Sym_m(\C)$, and for $SU_m \cap Sk_m(\C)$.  We denote the compact model in $F_m^{(*)}$ by $F_m^{(*)\, c}$.  Then, in each case if $M \simeq \C^N$, $F_m^{(*)\, c} \subset S^{2N-1}_{\sqrt{m}}$, the sphere of radius 
$\sqrt{m}$.  Thus, $aF_m^{(*)\, c} \subset S^{2N-1}_{a\sqrt{m}}$.   \par
Then, we first choose $0 < \eta << \gd < 1$ so that  $H : H^{-1}(B^*_{\eta}) \cap B_{\gd} \to B^*_{\eta}$ is the Milnor fibration of $H$.  \par 
We choose $0 < a < \eta$ so that also $a\sqrt{m} < \gd$.  Then, we observe the composition $aF_m^{(*)\, c} \subset aF_m^{(*)}\cap B_{\gd} \subset aF_m^{(*)}$ is 
a homotopy equivalence.  Hence, The restriction $\Phi : F_m^{(*)\, c} \times (0, a) \to H^{-1}(B^*_a) \cap B_{\gd} \to B^*_a$ restricts to a homotopy equivalence for each $0 < t < a$ and so gives a vanishing compact model.   
\end{proof}
\vspace{1ex} 
In light of Theorem \ref{Thm4.5}, there are several natural problems to be solved involving the characteristic cohomology for matrix singularities of each of the types.
\flushpar
{\it Problems for the Characteristic Cohomology of the Milnor Fibers of Matrix Singularities: } \par
\begin{itemize}
\item[1)]  Determine the characteristic subalgebras as the images of the exterior algebras by detecting which monomials map to nonzero elements in 
$H^*(\cV_w ; R)$.  
\item[2)] Identify geometrically these non-zero monomials in 1) via the 
pull-backs of the Schubert classes. 
\item[3)]  For the symmetric case with $\Z/2\Z$-coefficients, compute the 
Stiefel-Whitney classes of the pull-back of the vector bundle $\tilde{E}_m$.
\item[4)] Determine a set of module generators for the cohomology of 
the Milnor fibers as modules over the characteristic subalgebras.
\end{itemize}
We will give partial answers to these problems in the next sections.
\par
\section{Kite Spaces of Matrices for Given Flag Structures} 
\label{S:sec4}
We begin by introducing for a flag of subspaces for $\C^m$, a {\em linear kite subspace} of size $k$ in the space of $m \times m$ matrices of any of the three types: general $M_m(\C)$, symmetric $Sym_m(\C)$, or skew-symmetric $Sk_m(\C)$ (with $m$ even).  We initially consider the standard flag for $\C^m$, given by $0 \subset \C \subset \C^2 \subset \cdots \subset \C^{m-1} \subset \C^{m}$.  We choose coordinates 
$\{x_1, \cdots , x_m\}$ for $\C^m$ so that $\{x_1, \cdots , x_k\}$ are coordinates for 
$\C^k$ for each $k$.  \par 
We let $E_{i, j}$ denote the $m \times m$ matrix with entry $1$ in the $(i, j)$-position and $0$ otherwise. We also let $E^{(sy)}_{i, j} = E_{i, j} + E_{j, i}$, $i < j$, or $E^{(sy)}_{i, i} = E_{i, i}$ for the space of symmetric matrices; and $E^{(sk)}_{i, j} = E_{i, j} - E_{j, i}$, for $i < j$.   Then, we define
\begin{Definition}
\label{Def4.1}  For each of the three types of $m \times m$ matrices and the standard flag of subspaces of $\C^m$, the corresponding {\em linear kite subspace of size $\ell$} is the linear subspace of the space of matrices defined as follows:
\begin{itemize}
\item[i)] For $M_m(\C)$, it is the linear subspace $\bK_m(\ell)$ spanned by \par
$$  \{E_{i, j} : 1 \leq i, j \leq \ell\} \cup \{E_{i, i} : \ell < i \leq m\}$$
\item[ii)] For $Sym_m(\C)$, it is the linear subspace $\bK_m^{(sy)}(\ell)$ spanned by \par
$$  \{E^{(sy)}_{i, j} : 1 \leq i \leq j \leq \ell\} \cup \{E_{i, i}^{(sy)} : \ell < i \leq m\}$$
\item[iii)] For $Sk_m(\C)$ with $m$ even, for $\ell$ also even, it is the linear 
subspace $\bK_m^{(sk)}(\ell)$ spanned by \par
$$  \{E^{(sk)}_{i, j} : 1 \leq i < j \leq \ell\} \cup \{E^{(sk)}_{2i, 2i+1} : \ell < 2i < m\}$$
\end{itemize}
\par
Furthermore, we refer to the germ of the inclusion $\iti_m^{(*)}(\ell) : \bK_m^{(*)}(\ell), 0 \to M, 0$, for each of the three cases as a {\em linear kite map of size $\ell$}.
\end{Definition}
The general form of elements \lq\lq the kites\rq\rq\, in the linear kite subspaces have the form given in \eqref{Eqn4.1}.  
\par
\begin{equation}
\label{Eqn4.1}
Q_{\ell, m - \ell} \,\, = \,\, \begin{pmatrix} A_{\ell} & 0_{\ell, m - \ell}  \\
		0_{m - \ell, \ell} & D_{m - \ell} 
\end{pmatrix}
\end{equation}
where $A_{\ell}$ is an $\ell \times \ell$-matrix which denotes an arbitrary matrix in either $M_{\ell}(\C)$, resp. $Sym_{\ell}(\C)$, resp. $Sk_{\ell}(\C)$; and $0_{r, s}$ denotes the zero $r \times s$ matrix.  Also, $D_{m- \ell}$ denotes an arbitrary 
$(m- \ell) \times (m- \ell)$ diagonal matrix in the general or symmetric case as in Figure \ref{fig:altkitefig}.  In the skew symmetric case, $D_{m- \ell}$ denotes the $2 \times 2$ block diagonal matrix with skew-symmetric blocks of the form given by \eqref{Eqn4.1b} as in Figure \ref{fig:altskewkitefig}, with 
\begin{equation*}
\label{Eqn4.1b}
J_1(*) \,\, = \,\, \begin{pmatrix} 0 & *  \\
		-* & 0 
\end{pmatrix}
\end{equation*}
and \lq\lq\, * \rq\rq\, denoting an arbitrary entry.
\par
\vspace{1ex} 
\begin{figure}
$$ 
\begin{pmatrix} * & \cdots & * & 0  & \cdots & 0 \\
	\cdots & \cdots & \cdots & 0  & \cdots & 0 \\
 *& \cdots & * & 0  & \cdots & 0 \\
0 & \cdots & 0 & *  & \cdots & 0 \\
0 & \cdots & 0 & 0 & \ddots  & 0 \\
0 & \cdots & 0 & 0 & \cdots  & *
\end{pmatrix}
$$
\caption{Illustrating the form of elements of a linear kite space of size $\ell$ in either the space of general matrices or symmetric matrices. For general matrices the upper left matrix of size $\ell \times \ell$ is a general matrix, while for symmetric matrices it is symmetric.}
\label{fig:altkitefig}
\end{figure}
\par
\vspace{1ex} 
\begin{figure}
$$ 
\begin{pmatrix} * & \cdots & * & 0  & \cdots & 0 \\
	\cdots & \cdots & \cdots & 0  & \cdots & 0 \\
 *& \cdots & * & 0  & \cdots & 0 \\
0 & \cdots & 0 & J_1(*)  & \cdots & 0 \\
0 & \cdots & 0 & 0 & \ddots  & 0 \\
0 & \cdots & 0 & 0 & \cdots  & J_1(*)
\end{pmatrix}
$$
\caption{Illustrating the form of elements of a linear \lq\lq skew-symmetric kite\rq\rq space of size $\ell$ (with $\ell$ even) in the space of skew-symmetric matrices. The upper left $\ell \times \ell$ matrix is a skew-symmetric matrix.}
\label{fig:altskewkitefig}
\end{figure}
\par
We next extend this to general flags, and then to nonlinear subspaces as follows. 
For each of the three types of matrices $M =$ $M_m(\C)$, resp. $Sym_m(\C)$, resp. $Sk_m(\C)$ (with $m$ even). 

\begin{Definition}
\label{Def3.3} 
An {\em unfurled kite map} of given matrix type is any element of the orbit of 
$\iti_m^{(*)}(\ell)$, for $(*) = ( )$, resp. $(sy)$, resp.$(sk)$, under the corresponding equivalence group $\cK_{HM}$.   \par
A germ $f_0 : \C^n, 0 \to M, 0$ {\em contains a kite map of size $\ell$} for each of the three cases if there is a germ of an embedding $g : \bK_m^{(*)}(\ell), 0 \to \C^n, 0$ such that $f_0 \circ g$ is an unfurled kite map. 
\end{Definition}
\par 
\begin{Remark}
\label{Rem3.4} 
We note that unfurled kite maps have the property that the standard flag can be replaced by a general flag; and moreover, the flag and linear kite space can undergo nonlinear deformations.  These can be performed by iteratively applying appropriate row and column operations using elements of the local ring of germs on $\C^n,0$ instead of constants.  
\end{Remark}
A simple example of an unfurled kite map is given in Figure \ref{fig:unfkitefig}.
\par
\vspace{5ex} 
\begin{figure}
$$ 
\begin{pmatrix} 
x_{1,1} + 2 x_{4,4} x_{1,3} & x_{1,2} + 2 x_{4,4} x_{2,3}   & x_{1,3} + 2 x_{4,4} x_{3,3} & 0   \\
x_{1,2} + 2 x_{4,4} x_{2,3} & x_{2,2} -35 x_{1,2} x_{4,4} & x_{2,3} & (7 x_{1,2}- 5) x_{4,4}   \\
x_{1,3} + 2 x_{4,4} x_{3,3} & x_{2,3} & x_{3,3} & 0  \\
0 & (7 x_{1,2}- 5) x_{4,4} & 0 & x_{4,4}  
\end{pmatrix}
$$
\caption{An example of an unfurled kite map of size $3$ into $4 \times 4$ symmetric matrices.}
\label{fig:unfkitefig}
\end{figure}
\par
\section{Detecting Characteristic Cohomology using Kite Spaces of Matrices} 
\label{S:sec5}
\par
In \S 3 of Part I, we gave a detection criterion \cite[Lemma 3.2]{D6} for detecting the nonvanishing of a subgroup $E$ of the characteristic cohomology of the Milnor fiber 
$\cA^{(*)}(f_0, R)$.  In this section we use this criterion using kite maps to detect nonvanishing exterior subalgebras of $\cA^{(*)}(f_0, R)$.  In \S \ref{S:sec3}, we gave in equations \eqref{Eqn4.2}, \eqref{Eqn4.3}, and \eqref{Eqn4.4}  the cohomology of the Milnor fibers for the $\cD_m^{(*)}$ for each of the three types of matrices.  Thus, as for any matrix singularity $f_0 : \C^n, 0 \to M, 0$, by Theorem \ref{Thm4.5} the characteristic subalgebra is a quotient of the corresponding algebra.  
We let $E = \gL^* R\{e_{i_1}, \dots, e_{i_\ell}\} \subseteq H^*(F_m^{(*), c}; R)$ denote an exterior algebra on generators of the $\ell$ lowest degrees.  Then, using the map $\gl_E$ given before Proposition \ref{Prop4.6}, $\gl_E^*$ induces an isomorphism from $E$ to its image.  We next use $\gl_E$ to show that for germs $f_0$ containing a kite map of size $\ell$ for each case detects $E$ in 
$\cA^{(*)}(f_0, R)$.  
\begin{Thm}
\label{Thm5.1}
Let $f_0 : \C^n, 0 \to M, 0$ define an $m \times m$ matrix singularity of one of the three types.  \par
\begin{itemize}
\item[a)]  In the case of general matrices, if $f_0$ contains an unfurled kite map of size 
$\ell < m$, then $\cA(f_0, R)$ contains an exterior algebra of the form 
$$\gL^*R \langle e_3, e_5, \dots , e_{2\ell-1} \rangle\, .$$
 on $\ell - 1$ generators.
\item[b)]  In the case of skew-symmetric matrices (with $m$ even), if $f_0$ contains an unfurled skew-symmetric kite map of size $\ell (= 2k) < m$, then $\cA^{(sk)}(f_0, R)$ contains an exterior algebra of the form 
$$ \gL^*R \langle e_5, e_9, \dots , e_{4k-3} \rangle \, .$$
 on $k-1$ generators.  
\item[c)]  In the case of symmetric matrices, if $f_0$ contains an unfurled symmetric kite map of size $\ell < m$, then $\cA^{(sy)}(f_0, R)$ contains an exterior algebra of one of the forms 
\begin{align*}
&\gL^*\bk \langle e_3, e_5, \dots , e_{2\ell-1} \rangle  \qquad \text{ if $R = \bk$ is a field of characteristic $0$, }  \\
&\gL^*\Z/2\Z \langle e_2, e_3, \dots , e_{\ell} \rangle  \qquad \text{ if $R = \Z/2\Z$ , } 
\end{align*}
\end{itemize}
\end{Thm}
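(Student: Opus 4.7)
The plan is to reduce to the standard linear kite map via the matrix equivalence group, construct an explicit test map from a smaller compact model into the kite space, and then invoke the detection criterion of Part I.

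\textbf{Reduction to the linear kite.} By Definition \ref{Def3.3}, the hypothesis produces an embedding $g : \bK_m^{(*)}(\ell), 0 \to \C^n, 0$ such that $f_0 \circ g$ lies in the $\cK_{HM}$-orbit of $\iti_m^{(*)}(\ell)$. Because $(f_0 \circ g)_w^* = g^* \circ f_{0,w}^*$ as maps on Milnor-fiber cohomology, it suffices to show that $(f_0 \circ g)_w^*$ is injective on the exterior algebra $E$; and by the $\cK_{HM}$-invariance statement of Corollary \ref{Cor2.3}(a), we may replace $f_0 \circ g$ by the standard linear kite $\iti_m^{(*)}(\ell)$. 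The theorem is thus reduced to exhibiting $E$ inside $\cA^{(*)}(\iti_m^{(*)}(\ell), R)$.

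\textbf{Construction of the test map.} Define $\gk : F_{\ell}^{(*), c} \to \bK_m^{(*)}(\ell)$ by sending $A_{\ell}$ to the kite matrix $Q_{\ell, m-\ell}$ of \eqref{Eqn4.1} whose lower-right block $D_{m - \ell}$ is $I_{m-\ell}$ in the general and symmetric cases, and $\mathrm{diag}(J_1, \dots, J_1)$ in the skew-symmetric case (with $\ell$ and $m-\ell$ even). In each of the three cases the defining equation factors as $H(Q_{\ell,m-\ell}) = H_{\ell}(A_{\ell}) \cdot H(D_{m-\ell})$, and the choice of $D_{m-\ell}$ forces $H(D_{m-\ell}) = 1$, so $\gk$ lands in $(\iti_m^{(*)}(\ell))^{-1}(F_m^{(*)})$. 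By direct inspection of the block-diagonal structure, the composition $\iti_m^{(*)}(\ell) \circ \gk : F_{\ell}^{(*), c} \to F_m^{(*), c}$ coincides with the iterated inclusion $\gl_E = \tilde{\itj}^{(*)}_{m-1} \circ \cdots \circ \tilde{\itj}^{(*)}_{\ell}$ defined just before Proposition \ref{Prop4.6}.

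\textbf{Detection.} By Proposition \ref{Prop4.5}, $\gl_E^*$ restricts to an isomorphism from the exterior subalgebra $E$ onto its image in $H^*(F_{\ell}^{(*), c}; R)$, with the exact indexing of generators in each of the three cases supplied by parts (1), (2), (3) of that proposition. Combining this with the vanishing compact model $\Phi$ of Proposition \ref{Prop4.6}, applied by scaling $\gk$ by a sufficiently small parameter $t$, gives a compact vanishing submodel for the Milnor fiber of $\iti_m^{(*)}(\ell)$ whose image in the ambient Milnor fiber $F_m^{(*)}$ is homotopic to $\gl_E(F_{\ell}^{(*), c}) \subset F_m^{(*), c}$. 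Plugging this configuration into the detection criterion \cite[Lemma 3.2]{D6} shows that $(\iti_m^{(*)}(\ell))_w^*$ is injective on $E$; the reduction of the first step then yields $E \subseteq \cA^{(*)}(f_0, R)$.

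\textbf{Main obstacle.} The principal bookkeeping challenge is to identify $\iti_m^{(*)}(\ell) \circ \gk$ with $\gl_E$ as maps into the compact model $F_m^{(*), c}$ itself, rather than merely into $F_m^{(*)}$, so that the detection criterion applies cleanly. This requires matching the rescaling in Proposition \ref{Prop4.6} with the norm normalization placing $F_m^{(*), c}$ in $SU_m$, and in the skew-symmetric case accounting for the discrepancy between $SU_m \cap Sk_m(\C)$ and the Cartan model recorded in Table \ref{Table1} (which differ by multiplication by $J_k^{-1}$, with $m = 2k$). Once this is resolved, the three parts (a)--(c) follow uniformly from the corresponding clauses of Proposition \ref{Prop4.5}, with the $R = \Z/2\Z$ subcase using the Stiefel--Whitney class generators of \eqref{Eqn4.4} and the characteristic-zero cases using the exterior generators of \eqref{Eqn4.2} and \eqref{Eqn4.3}.
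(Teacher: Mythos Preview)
Your proposal is correct and follows essentially the same route as the paper's proof: reduce via $\cK_{HM}$-invariance to the linear kite map, embed $F_{\ell}^{(*),c}$ into the kite space as the block-diagonal matrix (which you call $\gk$ and the paper treats via the inclusions \eqref{Eqn5.3}--\eqref{Eqn5.4}), identify the composite with the iterated inclusion $\gl_E$, apply Proposition \ref{Prop4.5} and the vanishing compact model of Proposition \ref{Prop4.6}, and finish with the Detection Lemma. The only difference is expository: the paper carries out the scaling/norm bookkeeping you flag in your ``main obstacle'' paragraph explicitly via the commutative diagram \eqref{CD5.5}, whereas you leave it as a remark; but the content is the same.
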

\begin{Remark}
\label{Rem5.2}
In the symmetric case, it follows from c) that if $f_0$ contains an unfurled symmetric kite map of size $\ell < m$, then the Stiefel-Whitney classes of the pull-back bundle $w_i(f_{0, w}^*(\tilde{E}_m))$ on $\cV_w$ are non-vanishing for $i = 2, \dots , \ell$. 
\end{Remark}
\begin{proof}
By Theorem \ref{Thm4.5} and the Detection Lemma \cite[Lemma 3.2]{D6}, it is sufficient to show that the corresponding kite maps of each type detect the corresponding exterior subalgebra.  We use the notation from the proof of Proposition \ref{Prop4.6} which gave the vanishing compact models for the Milnor fibers in each case. \par
Then, we choose $0 < \eta << \gevar < \gd < 1$ so that  $H : H^{-1}(B^*_{\eta}) \cap B_{\gd} \to B^*_{\eta}$ is the Milnor fibration of $H$ and $H\circ \iti_m^{(*)}(\ell) : (H\circ \iti_m^{(*)}(\ell))^{-1}(B^*_{\eta}) \cap B_{\gevar} \to B^*_{\eta}$ is the Milnor fibration of $H\circ \iti_m^{(*)}(\ell)$.  
We also choose $0 < a < \eta$ so that $a\sqrt{m} < \gevar$.  

\par
Then there are the following inclusions.
\begin{equation}
\label{Eqn5.3}
aF_{\ell}^{(*)\, c} \,\, \subset\,\,  \iti_m^{(*)}(\ell)(H\circ \iti_m^{(*)}(\ell))^{-1}(a^r) \cap B_{\gevar}) \,\, \subset \,\, aF_m^{(*)}\cap B_{\gevar} \,\, \subset \,\, aF_m^{(*)}\, , 
\end{equation} 
 where $r = m$ in the general or symmetric case or $r = \frac{m}{2}$ in the 
skew-symmetric case.  The composition of inclusions 
$F_{\ell}^{(*)\, c} \subset F_m^{(*)\, c} \subset F_m^{(*)}$ commutes with multiplication by $a$ as in \eqref{CD5.5} where each vertical map is a diffeomorphism given by multiplication by $a$.   
\begin{equation}
\label{CD5.5}
\begin{CD}
   {F_{\ell}^{(*)\, c}} @>>> {F_m^{(*)\, c}} @>>> {F_m^{(*)}}\\
  @VVV @VVV @VVV \\
{aF_ {\ell} ^{(*)\, c}} @>{\iti^{(*)}_m(\ell)}>> {aF_m^{(*)\, c}}  @>>> {aF_m^{(*)}}\\
\end{CD} 
\end{equation}
Also, $\iti^{(*)}_m(\ell)$ in the bottom row is given by the map in \eqref{Eqn5.4}.
\begin{equation}
\label{Eqn5.4}
aA \,\, \mapsto \,\, aQ_{\ell, m - \ell} \,\, = \,\, \begin{pmatrix} aA_{\ell} & 0_{\ell, m - \ell}  \\
		0_{m - \ell, \ell} & aD_{m - \ell} 
\end{pmatrix}
\end{equation} 
\par
Then, by Proposition \ref{Prop4.5} the induced homomorphisms in cohomology for the top row of \eqref{CD5.5} restrict to an isomorphism on the corresponding exterior subalgebra of $H^*(F_m^{(*)} ; R)$ onto the cohomology $H^*({F_{\ell}^{(*)\, c}}; R)$, and vanishing on the remaining generators.  Hence, as the vertical diffeomorphisms induce isomorphisms on cohomology, the induced homomorphisms on cohomology for the bottom row have the same property.  Lastly, in \eqref{Eqn5.3}, the induced homomorphisms in cohomology restrict to an isomorphism on the corresponding exterior subalgebra of $H^*({F_{m}^{(*)\, c}}; R)$ to $H^*({aF_{\ell}^{(*)\, c}}; R)$.  Thus the induced homomorphism to the Milnor fiber of $H\circ \iti_m^{(*)}(\ell)$,  
$$ H^*({aF_{m}^{(*)\, c}}; R) \longrightarrow H^*(H\circ \iti_m^{(*)}(\ell))^{-1}(a^r) \cap B_{\gevar}; R)$$
restricts to an isomorphism of the corresponding exterior algebra onto its image.  Thus, 
the cohomology of the Milnor fiber of $H\circ \iti_m^{(*)}(\ell)$ contains the claimed exterior subalgebra.  Thus, the flag map $\iti^{(*)}_m(\ell)$ detects the corresponding exterior algebra, so the result follows by the Detection Lemma. 
\end{proof}
\section{Examples of Matrix Singularities Exhibiting Characteristic Cohomology} 
\label{S:sec6}
\par
We consider several examples illustrating Theorem \ref{Thm5.1}. \par 
\begin{figure}[h]
$$ 
\begin{pmatrix} 
x_{1,1} + 2 x_{4,4} x_{1,3} & x_{1,2} + 2 x_{4,4} x_{2,3}   & x_{1,3} + 2 x_{4,4} x_{3,3} & y_1   \\
x_{1,2} + 2 x_{4,4} x_{2,3} & x_{2,2} -35 x_{1,2} x_{4,4} & x_{2,3} +y_1 x_{1,1}^2 & (7 x_{1,2}- 5) x_{4,4}   \\
x_{1,3} + 2 x_{4,4} x_{3,3}  & x_{2,3} +y_1 x_{1,1}^2  & x_{3,3} + y_2 x_{2,2}^2  & y_2 \\
y_1 & (7 x_{1,2}- 5) x_{4,4} & y_2 & x_{4,4}  
\end{pmatrix}
$$
\caption{An example of a germ $f_0$ containing an unfurled kite map of size $3$ into $4 \times 4$ symmetric matrices in Figure \ref{fig:unfkitefig}.}
\label{fig:unfkitefigex1}
\end{figure}
\par
\begin{Example}
\label{Exam6.1}
Let $f_0 ; \C^9, 0 \to Sym_4(\C), 0$ be defined by $f_0(\bx, \by)$ given by the matrix in Figure \ref{fig:unfkitefigex1} for $\bx = (x_{1,1}, x_{1,2}, x_{1,3}, x_{2,2}, x_{2,3}, x_{3,3}, x_{4,4})$ and $\by = (y_1, y_2)$.  We let 
$\cV_0 = f_0^{-1}(\cD_4^{(sy)})$.  This is given by the determinant of the matrix in Figure \ref{fig:unfkitefigex1} defining $f_0$.  Then, $\cV_0$ has singularities in codimension $2$.
We observe that when $\by = (0, 0)$ we obtain the unfurled kite map in Figure \ref{fig:unfkitefig}.  Thus, by Theorem \ref{Thm5.1}, the Milnor fiber of $\cV_0$ has cohomology with $\Z/2\Z$ coefficients containing the subalgebra $\gL^*\Z/2\Z \langle e_2, e_3 \rangle$, so there is $\Z/2\Z$ cohomology in degrees $2$, $3$, and $5$.  We also note that $e_j = w_j(f_{0, w}^*\tilde{E}_4)$ so that one consequence is that the second and third Stiefel-Whitney classes of the pullback of the vector bundle 
$\tilde{E}_4$ are non-zero.  \par
 For coefficients a field $\bk$ of characteristic $0$, the cohomology of the Milnor fiber of 
$\cV_0$ has an exterior algebra $\gL^*\bk \langle e_5 \rangle$, so there is a $\bk$ generator $e_5$ in degree $5$.  \par
By Kato-Matsumota \cite{KM}, as singularities have codimension $2$, the Milnor fiber is simply connected.  Then, we can use the preceding to deduce information about the integral cohomology of the Milnor fiber from the universal coefficient theorem.  It must have rank at least $1$ in dimension $5$, and it has $2$-torsion in dimension $2$.  
\end{Example}
Second, we consider a general matrix singularity.
\begin{figure}[h]
$$ 
\begin{pmatrix} 
x_{1,1}  & x_{1,2}   & x_{1,3} & x_{1,4}  & g_1(\bx, \by)   \\
x_{2,1}  & x_{2,2} & x_{2,3} & x_{2,4} &  g_2(\bx, \by) \\
x_{3,1} & x_{3,2}  & x_{3,3}  & x_{3,4}  & g_3(\bx, \by) \\
x_{4,1} & x_{4,2}  & x_{4,3}  & x_{4,4}  & g_4(\bx, \by) \\
y_1 & y_2 & y_3 & y_4 &  x_{5,5}  
\end{pmatrix}
$$
\caption{An example of a germ $f_0$ in Example \ref{Exam6.2}, containing a linear kite map of size $4$ into $5 \times 5$ general matrices with $g_i(\bx, 0) \equiv 0$ for each
 $i$).}
\label{fig:unfkitefigex2}
\end{figure}
\par
\begin{Example}
\label{Exam6.2}
 We let $f_0 ; \C^{21}, 0 \to M_5(\C), 0$ be defined with 
$f_0(\bx, \by)$ given by the matrix in Figure \ref{fig:unfkitefigex2} for $\bx = (x_{1,1}, \dots , x_{4,4}, x_{5,5})$ and $\by = (y_1, y_2, y_3, y_4)$.  In this example we require that $g_i(\bx, 0) \equiv 0$ for each $i$ . We let $\cV_0 = f_0^{-1}(\cD_5)$.  This is given by the determinant of the matrix in Figure \ref{fig:unfkitefigex2} defining $f_0$.  Then, the $\cV_0$ has singularities in codimension $4$ in $\C^{21}$; hence by Kato-Matsumoto, the Milnor fiber is $2$-connected.
We observe that when $\by = (0, 0, 0, 0)$ we obtain the linear kite map $\iti_5(4)$.
Thus, by Theorem \ref{Thm5.1}, the Milnor fiber of $\cV_0$ has characteristic cohomology with integer coefficients containing the subalgebra $\gL^*\Z \langle e_3, e_5, e_7 \rangle$.  Hence, the integer cohomology has rank at least $1$ in dimensions $0, 3, 5, 7, 8, 10, 12, 15$.  We cannot determine at this point whether the generator $e_9$ maps to a nonzero element in the cohomology of the Milnor fiber of $\cV_0$.  Even if it does, there are several products involving $e_9$ in exterior algebra for the cohomology of $\cD_5$ must map to $0$, as the Milnor fiber is homotopy equivalent to a CW-complex of dimension $20$.  
\end{Example}
\subsubsection*{Module structure over $\cA^{(*)}(f_0, R)$ for the Cohomology of the Milnor fiber} 
\par
\begin{Remark}
\label{Rem7.7} \par
In \S 4 of part I, we considered how in the hypersurface case the cohomology of the Milnor fiber is a module over the characteristic cohomology and listed four issues which must be addressed.  Already for condition i) and $\cV = \cD_m^{(*)}$, this leaves the remaining issues to be addressed:
\begin{itemize}
\item[1)] giving a sufficient condition that guarantees that the partial criterion \cite[(4.2)]{D6} is satisfied to ensure that for the singular Milnor number $\mu_{\cV}(f_0)$ there is a contribution of a summand of that rank..
\item[2)] determining $\mu_{\cV}(f_0)$ for $\cV = \cD_m^{(*)}$.  In the case that 
$\cV_0$ has an isolated singularity (which requires that $n$ is small, i.e.
$n \leq \codim(\sing(\cD_m^{(*)}))$, but allows arbitrary $m$), Goryunov-Mond \cite{GM} give a formula in all three cases for $\mu_{\cV}(f_0)$ in terms of the formula of \cite{DM} for free divisors with a correction term given by an Euler characteristic for a Tor sequence.  Alternatively, by a different method using \lq\lq free completions\rq\rq in all three cases, with arbitrary $n$ but for small $m$, Damon-Pike \cite{DP} give formulas for $\mu_{\cV}(f_0)$ as alternating sums of lengths of explicit determinantal modules.  However, there still does not exist a formula valid for all $m$ and $n$.  
\end{itemize}
\end{Remark}
\section{Characteristic Cohomology for the Complements and Links of Matrix Singularities} 
\label{S:sec8}
\par
We now turn to the characteristic cohomology of the complement and link for matrix singularities of all three types.  Again, we may apply the Second Detection Lemma of Part I \cite[Lemma 3.4]{D6} for complements to detect a nonvanishing subalgebra of 
$\cC^{(*)}(f_0, R)$ and corresponding nonvanishing subgroups of $\cB^{(*)}(f_0, \bk)$.
In order to apply the earlier results to the cases of matrix singularities, we first recall in Table \ref{Coh.Compl.Lnk} the cohomology, with coefficients a field $\bk$ of characteristic $0$, of the complements and links as given in \cite[table 2]{D3}.  
We will then use the presence of kite maps to detect both subalgbras of $\cC^{(*)}(f_0, R)$ for the complements and subgroups of $\cB^{(*)}(f_0, \bk)$ for the links. 
\begin{Thm}
\label{Thm8.7}
Let $f_0 : \C^n, 0 \to M, 0$ define a matrix singularity $\cV_0$ of any of the three types.  If $f_0$ contains a kite map of size $\ell$, then the characteristic cohomology of the complement $\cC^{(*)}(f_0, \bk)$, for a field $\bk$ of characteristic $0$, contains an exterior algebra given by Table \ref{V_0.compl.link}.  \par 
Furthermore, the characteristic cohomology of the link $\cB^{(*)}(f_0, \bk)$, as a 
graded vector space contains the graded subspace given by truncating the exterior subalgebra of $\cC^{(*)}(f_0, \bk)$ listed in column $2$ of Table \ref{V_0.compl.link} in the top degree and shifting by the amount listed in the last column.  \par 
For the complements in the general and skew-symmetric cases, $\bk$ may be replaced by any coefficient ring $R$.  
\end{Thm}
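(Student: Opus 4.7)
The plan is to mirror the proof of Theorem \ref{Thm5.1}, replacing the first Detection Lemma by the Second Detection Lemma \cite[Lemma 3.4]{D6} for the complement (together with its link counterpart), and replacing the Milnor fiber compact models by the compact models for the complements and links. By \cite{D3}, these too are classical symmetric spaces in Cartan's sense, and by \cite{D4} they carry Schubert decompositions compatible with the block-diagonal tower inclusions of Section \ref{S:sec3}. I will denote the corresponding compact model for the complement $M \setminus \cD_m^{(*)}$ by $U_m^{(*), c}$.

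First I would record the tower structure for $U_m^{(*), c}$: using the same block-diagonal inclusions $\tilde{\itj}_{m-1}^{(*)}$ from Section \ref{S:sec3}, the Schubert decomposition of \cite{D4} yields the exact analog of Proposition \ref{Prop4.5} for the complements, namely the inclusion-induced map in cohomology is an isomorphism onto the exterior subalgebra of column $2$ of Table \ref{V_0.compl.link} truncated below the top generator, and sends the next generator to zero. Second, I would construct a vanishing compact model for the complement exactly as in Proposition \ref{Prop4.6}: the map $\Phi(A, t) = t \cdot A$ on $U_m^{(*), c} \times (0, \gd)$ lands in $(M \setminus \cV) \cap B_\gd$ for $0 < a < \eta$ sufficiently small, and the inclusion $aU_m^{(*), c} \hookrightarrow (M \setminus \cV) \cap B_\gd$ is a homotopy equivalence. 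Third, given that $f_0$ contains an unfurled kite map of size $\ell$, by the $\cK_M$-invariance of $\cC^{(*)}(f_0; R)$ from Corollary \ref{Cor2.3}(c), I may assume $f_0 \circ g = \iti_m^{(*)}(\ell)$ for an embedding $g$. Then the commutative diagram analogous to \eqref{CD5.5}, now with complement compact models, exhibits a factorization $U_\ell^{(*), c} \hookrightarrow U_m^{(*), c}$ through the kite complement whose cohomology pullback is, by the first step, an isomorphism on the claimed exterior subalgebra. The Second Detection Lemma then places this subalgebra inside $\cC^{(*)}(f_0; \bk)$. For the general and skew-symmetric cases the compact models are torsion-free in the degrees involved (as in parts 1 and 2 of Proposition \ref{Prop4.5}), so the same argument extends from $\Z$-coefficients to any coefficient ring $R$.

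For the link statement, I would combine this with the link version of the Second Detection Lemma from Part I together with the long exact sequence of the pair $(\text{complement}, \text{link})$, which in each of the three cases takes a Gysin-type form tying the complement cohomology to that of the link via transgression with a shift equal to the real codimension of $\cV$ in the ambient sphere minus one, as recorded in the last column of Table \ref{V_0.compl.link}. Truncation at the top degree arises because the link is an odd-dimensional real manifold and the generators of the exterior algebra in the highest degree have no room to survive. Given a class in $\cC^{(*)}(f_0; \bk)$ detected by the kite factorization above, its image under transgression produces a corresponding nonzero class in $\cB^{(*)}(f_0; \bk)$ with the prescribed shift, establishing the claim as a graded subspace inclusion.

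The main obstacle is the link portion: while the exterior subalgebra in the complement is obtained by a direct adaptation of Theorem \ref{Thm5.1}, controlling precisely which shifted degrees survive to the link cohomology requires matching the transgression shift and top-degree truncation with the data in Table \ref{V_0.compl.link}. This must be verified case-by-case for the three types of matrices, since the defining equation for $\cV$ differs (determinant versus Pfaffian) and the link geometry produces different codimensions in each case; the bookkeeping is the delicate ingredient, whereas the detection mechanism itself is structurally parallel to the Milnor fiber argument of Theorem \ref{Thm5.1}.
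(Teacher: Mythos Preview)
Your treatment of the complement is essentially the paper's own: reduce via $\cK_M$-invariance to a linear kite map, build the vanishing compact model $\Phi(A,t)=t\cdot A$ from the compact symmetric-space model of $M\setminus\cD_m^{(*)}$, use the tower of inclusions to identify the image of the induced cohomology map as the exterior subalgebra in Table~\ref{V_0.compl.link}, and invoke the Second Detection Lemma \cite[Lemma~3.4]{D6}. No objection there.

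The link argument, however, is misrouted. There is no Gysin sequence or transgression available for the pair $(\text{complement},\text{link})$: Gysin sequences and transgressions belong to sphere bundles and fibrations, and the inclusion $L(\cV_0)\hookrightarrow (B_\gevar\setminus\cV_0)$ is neither. What actually relates the two is Alexander duality in the ambient sphere $S^{2n-1}$. Concretely, $B_\gevar\setminus\cV_0$ deformation retracts onto $S^{2n-1}\setminus L(\cV_0)$, so the exterior subalgebra you have detected in $\cC^{(*)}(f_0;\bk)$ sits inside $H^*(S^{2n-1}\setminus L(\cV_0);\bk)$. Passing to its Kronecker dual in homology and applying Alexander duality yields a graded subspace of $H^*(L(\cV_0);\bk)$; this is exactly what the paper does. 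The identification of that subspace with the \emph{truncated and shifted} exterior algebra is not a dimension count on the link but a consequence of the fact that a finitely generated exterior algebra satisfies Poincar\'e duality under multiplication: the Alexander-duality shift $2n-2-(\text{top degree of the subalgebra})$ then matches the entries in the last column of Table~\ref{V_0.compl.link}. The paper refers to \cite[Prop.~1.9]{D3} for this computation.

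So the missing ingredient is precisely the Alexander duality step together with the Poincar\'e self-duality of the exterior subalgebra; once you replace your long-exact-sequence/transgression paragraph by that, the proof is complete and coincides with the paper's.
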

\par
\begin{table}
\begin{tabular}{|l|c|c|l|}
\hline
Determinantal  & Complement  & $H^*(M \backslash \cD,\bk) \simeq$   & 
\,\,Shift  \\
  Hypersurface &  $M \backslash \cD$   &  $H^*(K/L,\bk)$   &   \\
\hline
$\cD_m^{sy}$ & $GL_m(\C)/O_m(\C)$ &  $\gL^*\bk\langle e_1, e_5, \dots , 
e_{2m-1}\rangle$  & $\binom{m+1}{2} - 2$ \\  
(m = 2k+1)  & $\sim U_m/O_m(\R)$    &    &     
 \\
\hline
$\cD_m^{sy}$ & $GL_m(\C)/O_m(\C)$ & $\gL^*\bk\langle e_1, e_5, \dots , 
e_{2m-3}\rangle$  &  $\binom{m+1}{2} + m - 2$ \\
(m = 2k)  &     &     & 
\\
\hline
$\cD_m$  & $GL_m(\C) \sim U_m$ & $\gL^*\bk\langle e_1, e_3, \dots , 
e_{2m-1}\rangle$  &  $m^2 - 2$  \\
\hline
$\cD_m^{sk}$ &  $GL_{2k}(\C)/Sp_{k}(\C)$  & $\gL^*\bk\langle e_1, e_5, 
\dots , e_{2m-3}\rangle$  &  $\binom{m}{2} - 2$  \\
(m = 2k) &  $\sim U_{2k}/Sp_{k}$   &     &    \\
\hline
\end{tabular}
\caption{The cohomology of the complements $M \backslash \cD$ and links 
$L(\cD)$ for each determinantal hypersurface $\cD$.  The complements, 
are homotopy equivalent to the quotients of maximal compact subgroups 
$K/L$ with cohomology given in the third column, where the generators of 
the cohomology $e_k$ are in degree $k$; and the structure is an exterior 
algebra.  For the links $L(\cD)$, the cohomology is isomorphic as a vector 
space to the cohomology of the complement truncated in the top degree 
and shifted by the degree indicated in the last column.}
\label{Coh.Compl.Lnk}
\end{table}
\par
\begin{Remark}
\label{Rem8.5}
In what follows to simplify statements, instead of referring to the complement of $\cV_0, 0 \subset \C^n, 0$ as $B_{\gevar} \backslash \cV_0$ for sufficiently small $\gevar > 0$, we will just refer to the complement as $\C^n \backslash \cV_0$, with the understanding that it is restricted to a sufficiently small ball.  
\end{Remark}
\par  
\begin{proof}[Proof of Theorem \ref{Thm8.7}]
The proof is similar to that for Theorem \ref{Thm5.1}.  As the statements are independent of $f_0$ in a given $\cK_{\cV}$-equivalence class, we may apply an element of $\cK_H$ to obtain an $f_0$ containing a linear kite map.  It is sufficient to show, as for the case of Milnor fibers, that the linear kite map detects the indicated subalgebra in $\cC^{(*)}(f_0, \bk)$, and then apply Alexander duality for the result for the link. \par
By the results in \cite{D3} summarized in Table \ref{Coh.Compl.Lnk}, the complement $M \backslash \cD_m^{(*)}$ is given by a homogeneous space $G/H$ which has as a compact homotopy model $(K/L)$ where $K = U_m$ for each of the cases.  For successive values of $m$, we have for the three cases the successive inclusions:  
\begin{itemize}
\item[i)] for the general case, $GL_m(\C) \hookrightarrow GL_{m+1} (\C)$ by $A \mapsto \begin{pmatrix} A & 0 \\ 0 & 1 \end{pmatrix}$; 
\item[ii)] for the symmetric case, $GL_m(\C) \hookrightarrow GL_{m+1} (\C)$ sending $A \mapsto \begin{pmatrix} A & 0 \\ 0 & 1 \end{pmatrix}$ induces an inclusion $GL_m(\C)/O_m(\C) \hookrightarrow GL_{m+1}(\C) /O_{m+1}(\C)$;  
\item[iii)] for even $m = 2k$, $GL_m(\C) \hookrightarrow GL_{m+2} (\C)$ sending $A \mapsto \begin{pmatrix} A & 0 \\ 0 & I_2 \end{pmatrix}$, for $I_2$ the $2 \times 2$ identity matrix, induces an inclusion $GL_m(\C)/Sp_k(\C) \hookrightarrow GL_{m+2} (\C)/Sp_ {k+1}(\C)$.
\end{itemize}
Then, these are obtained by the action of $GL_m(\C)$ on the appropriate spaces of matrices.  They restrict to the compact homogenenous spaces which are homotopy equivalent models for the complements, given in Table \ref{Coh.Compl.Lnk} and which we denote by $K/L$ for each of the three cases.  Also, the inclusions correspond to the following inclusions of spaces of matrices. 
\begin{itemize}
\item[i)] for the general case, $M_m(\C) \hookrightarrow M_{m+1} (\C)$ by $A \mapsto \begin{pmatrix} A & 0 \\ 0 & 1 \end{pmatrix}$; 
\item[ii)] for the symmetric case, $Sym_m(\C) \hookrightarrow Sym_{m+1} (\C)$ sending $A \mapsto \begin{pmatrix} A & 0 \\ 0 & 1 \end{pmatrix}$;  
\item[iii)] for even $m = 2k$, $Sk_m(\C) \hookrightarrow Sk_{m+2} (\C)$ sending $A \mapsto \begin{pmatrix} A & 0 \\ 0 & J_1 \end{pmatrix}$, for the $2 \times 2$ 
skew-symmetric matrix $J_1 = \begin{pmatrix} 0 & 1 \\ -1 & 0 \end{pmatrix}$.
\end{itemize}
\par
Furthermore, for the cohomology of these spaces (via their homotopy equivalent 
compact models $K/L$ for each case) the maps induced by the inclusions sends 
$e_j \mapsto e_j$ for the nonzero generators in successive spaces.  \par  
Via these inclusions, the corresponding actions of $GL_m(\C)$ on these spaces (as explained in \cite{D3}) applied to either $I_m$ for the general or symmetric case, or $J_k$ for the skew symmetric case factor through the homogeneous spaces given in Table \ref{Coh.Compl.Lnk} to give diffeomorphisms to the complements of 
$\cD_m^{(*)}$ in each case.  The inclusions of the homogeneous spaces correspond to the inclusions of the spaces of nonsingular matrices.  Under this correspondence, the cohomology of the homogeneous spaces gives the cohomology of the complements of the spaces of $m \times m$ singular matrices $M^{(*)}_m \backslash \cD_m^{(*)}$.  Here we let $M^{(*)}_m$ denotes the space of $m \times m$ matrices of appropriate type. 
\par
Just as for Milnor fibers, we use multiplication to define a vanishing compact model for the complement. 
We let $\cP^{(*)} \subset M^{(*)}_m \backslash \cD_m^{(*)}$ denote the compact model for the complement in each of the three cases.  The action of $U_m$ in each case gives elements $A$ of the compact model to be products of elements of $U_m$ and hence 
$\| A \| = \sqrt{m}$.  Thus, $\cP^{(*)} \subset B_{\sqrt{m}}$.  Then, we can multiply the spaces of matrices by nonzero constants $a$ and for each case $a\cdot \cP^{(*)} \subset B_{a\sqrt{m}}$.  Then, for a neighborhood $B_{\gd}$ of $0$ in $M_{m}^{(*)}$, if $a\sqrt(m) < \gd$, then $a\cdot \cP^{(*)} \subset B_{\gd} \backslash \cD_m^{(*)}$.  \par Then, we define $\Phi : \cP^{(*)} \times (0, a) \to M_{m}^{(*)} \backslash \cD_m^{(*)}$ sending $\Phi(A, t) = t\cdot A$.  Then, $\Phi$ defines a vanishing compact model for the complement for each case.  \par
It remains to show that the kite map of size $\ell$ detects the corresponding exterior algebra given in Table \ref{V_0.compl.link} for the characteristic cohomology of the complement.  
We consider $\iti_m^{(*)} (\ell) : \bK_{\ell}(\C) \cap B_{\gevar} \to M_{m}^{(*)} $.  If $M_{\ell}^{(*)}$ denotes the embedding of the corresponding $\ell \times \ell$ matrices given above, then there is an $a >0 $ so that $a M_{\ell} ^{(*)} \subset \bK_{m}(\ell) \cap B_{\gevar}$.  Then, as in the proof of Theorem \ref{Thm5.1}, the composition 
$$ a (M_{\ell} ^{(*)}\backslash \cD_{\ell}^{(*)}) \,\, \subset \,\, (\bK_{m}(\ell)\backslash \cD_m^{(*)}) \cap B_{\gevar} \,\, \overset{\iti_m ^{(*)}(\ell)}{\longrightarrow} \,\, M^{(*)}_m\backslash \cD_m^{(*)} $$
 induces in cohomology an isomorphism from the exterior subalgebra given in Table \ref{V_0.compl.link} to a subalgebra of the cohomology of $a (M_{\ell}^{(*)}\backslash \cD_{\ell}^{(*)})$ (since it is diffeomorphic to $M_{\ell}^{(*)} \backslash \cD_{\ell}^{(*)}$).  As this homomorphism factors through $H^*(\C^n \backslash \cV_0; \bk)$, it is also an isomorphism onto a subalgebra of this cohomology.  This shows that $\iti_m^{(*)} (\ell)$ detects the exterior algebra, so by the Second Detection lemma in Part I 
\cite[Lemma 3.4]{D6}, the result follows for the complement. \par
Lastly, let $\widetilde{\gG}^{(*)}(f_0, \bk)$ denote the graded subspace of reduced homology obtained from the Kronecker dual $\gG^{(*)}(f_0, \bk)$ to this subalgebra.  Then, by Alexander duality we obtain a graded subspace of $H^*(L(\cV_0); \bk)$ isomorphic to $\widetilde{\gG}^{(*)}(f_0, \bk)$.  It remains to show it is obtained from the exterior algebra by truncating it and applying an appropriate shift.  As the exterior algebra satisfies Poincare duality under multiplication, this is done using the same argument in the proof of \cite[Prop. 1.9]{D3}.  
\end{proof}
\par
\begin{table}
\begin{tabular}{|l|c|l|}
\hline
Determinantal    & $\cC^{(*)}(f_0,\bk)$   & 
\,\,Shift for Link \\
  Hypersurface Type    & contains subalgebra   &    \\
\hline
$\cD_m^{sy}$  &  $\gL^*\bk\langle e_1, e_5, \dots , 
e_{2\ell-1}\rangle$  & $2n - \binom{\ell + 1}{2} - 2$ \\  
$\ell$ odd &    &    \\
\hline
$\cD_m^{sy}$  &  $\gL^*\bk\langle e_1, e_5, \dots , 
e_{2\ell-3}\rangle$  & $2n - \binom{\ell}{2} - 2$ \\  
$\ell$ even &    &    \\
\hline
$\cD_m$  & $\gL^*\bk\langle e_1, e_3, \dots , 
e_{2\ell-1}\rangle$  &  $2n -\ell^2$ - 2  \\
\hline
$\cD_m^{sk}$ (m = 2k)  & $\gL^*\bk\langle e_1, e_5, 
\dots , e_{2\ell-3}\rangle$  &  $2n - \binom{\ell}{2} - 2$  \\
$\ell$ even &    &    \\
\hline
\end{tabular}
\caption{The characteristic cohomology with coefficients in a field $\bk$  of characteristic $0$ for $\cV_0 = f_0^{-1}(\cV)$ for each matrix type $\cV = \cD_m^{(*)}$.   If $f_0$ contains an unfurled kite map of size $\ell$, the characteristic cohomology $\cC^{(*)}(f_0, \bk)$ contains an exterior subalgebra given in column 2 (where $e_j$ has degree $j$).  Then, for the link $L(\cV_0)$, the characteristic cohomology contains as a graded subspace the exterior algebra in column 2 truncated in the top degree and shifted by the degree indicated in the last column.  For the complements in the general or skew-symmetric cases, $\bk$ in column 2 may be replaced by any coefficient ring 
$R$.}
\label{V_0.compl.link}
\end{table}
\par
We reconsider the examples from \S \ref{S:sec6}
\begin{Example}
\label{Exam8.11}
In Example \ref{Exam6.1}, we considered a singularity $\cV_0$ defined by  $f_0 ; \C^9, 0 \to Sym_4(\C), 0$ given by the matrix in Figure \ref{fig:unfkitefigex1}.  It contains an unfurled kite map of size $3$.   We can apply Theorem \ref{Thm8.7}.  \par
For coefficients a field $\bk$ of characteristic $0$, from Table \ref{V_0.compl.link} the characteristic cohomology of the complement of $\cC^{(sy)}(f_0, \bk)$ contains an exterior algebra $\gL^*\bk \langle e_1, e_5 \rangle$, so there are $\bk$-vector space generators $1$, $e_1$, $e_5$, and $e_1\cdot e_5$ in degrees $0$, $1$, $5$ and $6$.  \par
The characteristic cohomology $\cB^{(sy)}(f_0, \bk)$ of the link of $\cV_0$, contains 
the subspace obtained by upper truncating the exterior algebra to obtain the $\bk$ 
vector space $\bk \langle 1, e_1, e_5 \rangle$ and shifting by 
$2\cdot 9 - 2 - \binom{4}{2} = 10$ to obtain $1$-dimensional generators in degrees $10$, $11$, and $15$.  
We note that the Link $L(\cV_0)$ has real dimension $15$, so a vector space 
generator of the characteristic subalgebra generates the top dimensional class.  \par
We also note that from Table 2 that $\cD_4^{(sy)}$ has link cohomology given by the upper truncated $\gL^*\bk \langle e_1, e_5 \rangle$ but shifted by 
$\binom{5}{2} + 4 - 2 = 12$ so there is $1$ dimensional cohomology in degrees 12, 13, and 17.  Thus, $f_0^*$ does not send any of these classes to nonzero classes in the characteristic cohomology.  \par
We do note that for the kite map $\iti_{4}^{(sy)}(3) : \C^7, 0 \to Sym_4(\C), 0$ the characteristic cohomology for the link is the upper truncated exterior algebra giving the $\bk$ vector space $\bk \langle 1, e_1, e_5 \rangle$ and then shifted by $6$.  Thus, its degrees are $6$, $7$ and $11$.  We see that as noted in \cite[Remark 1.8]{D6} in terms of the relative Gysin homomorphism, there is a shift in degrees given by twice the difference in dimension between each of the maps.  
\end{Example}
Second, we return to Example \ref{Exam6.2}. 
\begin{Example}
\label{Exam8.12}
From Example \ref{Exam6.2}, the singularity $\cV_0 = f_0^{-1}(\cD_5) $ is defined by $f_0 ; \C^{21}, 0 \to M_5(\C), 0$, given by the matrix in Figure \ref{fig:unfkitefigex2}.  Also, $f_0$ contains the linear kite map of size $4$.  Thus, we may apply Theorem \ref{Thm8.7}, the characteristic cohomology $\cC(f_0, R)$, for any coefficient ring $R$, contains the subalgebra $\gL^*R \langle e_1, e_3, e_5, e_7 \rangle$. 
Hence, characteristic cohomology of the complement has $R$ rank at least $1$ in all degrees between $0$ and $16$, except for $2$ and $14$, and it is rank at least $2$ in degree $8$.  \par 
The characteristic cohomology $\cB^{(sy)}(f_0, \bk)$ of the link contains the subspace obtained by upper truncating the exterior algebra over $\bk$ obtained from the same 
$\bk$ vector space by removing the generator of degree $16$ given by the product $e_1\cdot e_5 \cdot e_7 \cdot e_9$.  Then, we shift the resulting vector space by $2\cdot 21 - 2 - 4^2 = 24$ to obtain $1$-dimensional generators in all degrees between $24$ and $39$, except for $26$ and $38$, and it is dimension at least $2$ in degree $32$.  We note that the Link $L(\cV_0)$ has real dimension $39$, so again a vector space generator of the characteristic subalgebra generates the top dimensional class.  
\end{Example}

\section{Characteristic Cohomology for Non-square Matrix Singularities} 
\label{S:sec8a}
We extend the results for $m \times m$ general matrices and matrix singularities 
to non-square matrices. 
\subsection*{General $m \times p$ Matrix Singularities with $m \neq p$:} \hfill \par 
Let $M = M_{m, p}(\C)$ denote the space of $m \times p$ complex matrices (where we will assume $m \neq p$, with neither $= 1$).  We consider the case where 
$m > p$.  The other case $m < p$ is equivalent by taking transposes. The varieties of singular $m \times p$ complex matrices, $\cD_{m, p} \subset M_{m, p}(\C) $, with 
$m \neq p$ were not considered earlier because they do not have Milnor 
fibers.  However, the methods we applied earlier to $m \times m$ general matrices will also apply to the complement and link of $\cD_{m, p}$.  We explain that the complement has a compact homotopy model given by a Stiefel manifold.  As for the case of $m \times m$ matrices, it has a Schubert decomposition using the ordered factorization by \lq\lq pseudo-rotations\rq\rq\, due to the combined work of J. H. C. Whitehead \cite{W}, C. E. Miller \cite{Mi}, and I. Yokota \cite{Y}.  The Schubert cycles give a basis for the homology and the Kronecker dual cohomology classes which can be identified with the classes computed algebraically in \cite[Thm. 3.10]{MT} (or see e.g. \cite[\S 8]{D3}).  Thus, for appropriate coefficients, the form of both $\cC_{\cV}(f_0, R)$ and $\cB_{\cV}(f_0, \bk)$ can be given for $\cV = \cD_{m, p}$ and $f_0 : \C, 0 \to M_{m, p}(\C), 0$.  \par 
Then, we use the Schubert structure on the Stiefel manifolds to define vanishing compact models.  This allows us to define, as for the $m \times m$ case, kite subspaces and maps to detect nonvanishing characteristic cohomology of the complement and link.  
\par

\subsubsection*{Complements of the Varieties of Singular $m \times p$ 
Matrices} \hfill
\par
Let $M = M_{m, p}(\C)$ denote the space of $m \times p$ complex matrices. The 
varieties of singular $m \times p$ complex matrices, $\cD_{m, p}$, with 
$m \neq p$ were not considered earlier because they do not have Milnor 
fibers.  However, the methods do apply to the complement and link as a 
result of work of J. H. C. Whitehead \cite{W}.   We consider the case where 
$m > p$.  The other case $m < p$ is equivalent by taking transposes.  The complement to the variety $\cD_{m, p}$ of singular matrices and can be described as the ordered set of $p$ independent vectors in $\C^m$.  Then, the Gram-Schmidt procedure replaces 
them by an orthonormal set of $p$ vectors in $\C^m$.  This is the Stiefel 
variety $V_p(\C^m)$ and the Gram-Schmidt procedure provides a strong 
deformation retract of the complement $M \backslash \cV_{m, p}$  onto the 
Stiefel variety $V_p(\C^m)$.  Thus, the Stiefel variety is a compact model 
for the complement.  
\subsubsection*{Schubert Decomposition for the Stiefel Variety} \hfill
\par
The work of Whitehead \cite{W}, combined with that of C. E. Miller \cite{Mi}, and I. Yokota \cite{Y}, provides a Schubert-type cell decomposition for $V_p(\C^m)$ similar to that given in the $m \times m$ case.  There is an 
action of $GL_m(\C) \times GL_p(\C)$ on $M_{m, p}(\C)$ which is appropriate for 
considering $\cK_M$ equivalence of $m \times p$ complex matrix singularities.  However, just for understanding the topology of the link and complement of $\cD{m, p}$ it is sufficient to consider the left action of $GL_m(\C)$ acting on $M$ with an open orbit consisting of the matrices of rank $p$.  As explained in \cite{D4}, the complement $M_{m. p}(\C) \backslash \cD_{m, p}$ is diffeomorphic to the homogeneous space $GL_m(\C) /GL_{m - p}(\C)$.  The 
diffeomorphism is induced  by $GL_m(\C) \to M_{m, p}(\C)$ given by $A \mapsto A \cdot \begin{pmatrix}  I_p \\ 0_{m-p, p}\end{pmatrix}$.  Here the subgroup $GL_{m - p}(\C)$  represents the subgroup of elements $\begin{pmatrix} I_p & 0 \\ 0 & A \end{pmatrix}$ with $A \in GL_{m - p}(\C)$.  This gives the isotropy subgroup of the left action on $\begin{pmatrix}  I_p \\ 0_{m-p, p}\end{pmatrix}$.  \par
For successive values of $m$, we have the successive inclusions:  $GL_{m- 1}(\C) \hookrightarrow GL_m(\C)$ by $A \mapsto \begin{pmatrix} 1 & 0 \\ 0 & A \end{pmatrix}$.
These induce inclusions  
$$ \iti_{m-1, p- 1} : GL_{m - 1}(\C)/GL_{m - p}(\C)  \hookrightarrow GL_m(\C) GL_{m - p}(\C)\, .$$  
There is a corresponding inclusion of the spaces of matrices $M_{m - 1, p-1}(\C) \hookrightarrow M_{m, p}(\C)$ by $B \mapsto \begin{pmatrix} 1 & 0 \\ 0 & B \end{pmatrix}$.  This inclusion induces a map of the complements of the varieties of singular matrices $$ \tilde{\iti}_{m-1, p- 1} : M_{m - 1, p-1}(\C) \backslash \cD_{m-1, p-1} \hookrightarrow M_{m, p}(\C) \backslash \cD_{m, p}\, .$$ 
\par 
The actions of the groups on the spaces of matrices commute via the inclusions of the groups with the corresponding inclusions of spaces of matrices.  Thus, we have a commutative diagram of diffeomorphisms and inclusions 
\begin{equation}
\label{CD5.5a}
\begin{CD}
   {GL_{m - 1}(\C)/GL_{m - p}(\C)} @>{\iti_{m-1, p- 1}}>> {GL_m(\C) /GL_{m - p}(\C)}\\
  @V{\simeq}VV @V{\simeq}VV  \\
{M_{m - 1, p-1}(\C) \backslash \cD_{m-1, p-1}} @>{\tilde{\iti}_{m-1, p- 1}}>> {M_{m, p}(\C) \backslash \cD_{m, p}} \\
\end{CD} 
\end{equation}
\par 
The homogenenous spaces $GL_m(\C) /GL_{m - p}(\C)$ are homotopy equivalent to the homogenenous spaces given as the quotient of their maximal compact subgroups $U_m/U_{m - p}$.  Via the vertical isomorphism in \eqref{CD5.5}, the complement is diffeomorphic to the Stiefel variety $V_p(\C^m)$.  \par
By results of Whitehead \cite{W} applied in the complex case (see e.g. \cite[\S 3]{D4}), the Schubert cell decomposition of $V_p(\C^m)$ is given via ordered factorizations of matrices in $U_m$ into products of \lq\lq pseudo-rotations\rq\rq.  For this we use the reverse flag with $\tilde{e}_j = e_{m+1-j}$ for $j = 1, \dots, m$ and $\C^k$ spanned by 
$\{\tilde{e}_1, \dots , \tilde{e}_k\}$.  Then, any $B \in U_m$ can be uniquely written by a factorization in decreasing order.
\begin{equation}
\label{Eqn8.3a}
B \, \, = \,\,  A_{(\theta_k, v_k)} \cdots A_{(\theta_2, v_2)} \cdot 
A_{(\theta_1, v_1)}\, ,
\end{equation}
with 
$v_j \in_{\min} \C^{m_j}$ and $1 \leq m_1 < m_2 < \cdots < m_k \leq m$, 
and each $\theta_i \not \equiv 0  \,\mod 2\pi$.  Here $v_j \in_{\min} \C^{m_j}$ means $v_j \in \C^{m_j}$ but $v_j \not \in \C^{m_j-1}$.   Also, each $A_{(\theta_j, v_j)}$ is a pseudo-rotation about $\C<v_j>$, which is the identity on $\C<v_j>^{\perp}$ and multiplies $v_j$ by $e^{\theta_j \iti}$.  In \cite[\S 3]{D4} the results are given for increasing factorizations,; however, as explained there, the results equally well hold for decreasing factorizations.  
If $m_{k^{\prime}} > m-p \leq m_{k^{\prime} +1}$, then each $A_{(\theta_j, v_j)}$ for $j > k$ belongs to $U_{m-p}$.  Hence, $B$ is in the same $U_{m-p}$-coset as
$$ B^{\prime}  \, \, = \,\,  A_{(\theta_k, v_k)} \cdots A_{(\theta_k, v_k)}\, .$$
Then, the projections $p_{m, p}:  U_m \to U_m/U_{m-p}$ of the Schubert cells $S_{\bm}$ for $\bm = (m_1, \dots , m_k)$ with $m-p < m_1 < \dots < m_k \leq m$ give a cell 
decomposition for $U_m/U_{m-p} \simeq V_p(\C^M)$.  Furthermore, the closures 
$\overline{S_{\bm}}$, which are the Schubert cycles, are \lq\lq singular manifolds\rq\rq\ which have Borel-Moore fundamental classes (see e.g. comment after \cite[Thm. 3.7]{D4}).   

\subsubsection*{Cohomology of the Complement and Link} \hfill
\par
We can give a relation between the homology classes given by the Schubert cycles resulting from the Whitehead decomposition and  the cohomology with integer coefficients of the Stiefel variety, and hence the complement of the variety $\cD_{m, p}$ (computed in \cite[Thm. 8.10a]{MT}).  
\begin{Thm}
\label{Thm8.10a}
The homology of the complement of $\cD_{m, p}$ ($\simeq H_*(V_p(\C^m); \Z)$) 
has for a free $\Z$-basis the fundamental classes of the Schubert cycles, given as images $p_{m, p\, *}(\overline{S_{\bm}})$, with $\bm = (m_1, m_2, \dots m_k)$ for 
$m-p < m_1 < \cdots m_k \leq m$, as we vary over the Schubert decomposition of 
$U_m/U_{m-p}$.  The Kronecker duals of these classes give the $\Z$-basis for the 
cohomology, which is given as an algebra by
\begin{equation}
\label{Eqn8.1a}
 H^*(M_{m, p} \backslash \cD_{m, p}; \Z) \,\, \simeq \,\, \gL^*\Z\langle 
e_{2(m-p)+1}, e_{2(m-p)+3}, \dots , e_{2m-1}\rangle   
\end{equation}
with degree of $e_j$ equal to $j$.  \par
Moreover, the Kronecker duals of the {\em simple Schubert classes} 
$S_{(m_1)}$ for $m-p < m_1 \leq m$ are homogeneous generators of the exterior algebra cohomology. 
\end{Thm}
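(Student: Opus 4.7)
The plan is to combine the Whitehead--Miller--Yokota Schubert decomposition of $V_p(\C^m)$ with the algebraic cohomology computation of Mimura--Toda, and then identify exterior algebra generators with Kronecker duals of the simple Schubert classes. First, from the decreasing factorization \eqref{Eqn8.3a}, I parametrize each Schubert cell $S_{\bm}$ for $\bm = (m_1, \dots, m_k)$ with $m - p < m_1 < \cdots < m_k \leq m$ by the data $(\theta_j, [v_j])_{j=1}^{k}$, where $\theta_j \in (0, 2\pi)$ contributes one real dimension and $[v_j] \in \CP^{m_j - 1} \setminus \CP^{m_j - 2}$ contributes $2m_j - 2$ real dimensions, giving $\dim_{\R} S_{\bm} = \sum_{j=1}^{k}(2m_j - 1)$. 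Uniqueness of the factorization in \eqref{Eqn8.3a}, restricted to representatives of $U_{m-p}$-cosets with $m_1 > m - p$, implies that $\{p_{m,p}(S_{\bm})\}$ forms a CW decomposition of $V_p(\C^m) \simeq U_m/U_{m-p}$. As noted following \cite[Thm.~3.7]{D4}, each closure $\overline{S_{\bm}}$ is a singular manifold carrying a Borel--Moore fundamental class that pushes forward to $p_{m,p\,*}(\overline{S_{\bm}}) \in H_{\dim S_{\bm}}(V_p(\C^m); \Z)$.

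Second, I verify that the Schubert cycles form a free $\Z$-basis via a Poincar\'e-series comparison. Enumerating subsets $\bm \subseteq \{m-p+1, \dots, m\}$, the Poincar\'e polynomial of the cellular chain complex is
\[
P(t) \,\, = \,\, \prod_{j = m - p + 1}^{m}(1 + t^{2j - 1}).
\]
By \cite[Thm.~3.10]{MT}, $H^*(V_p(\C^m); \Z)$ is the free exterior algebra on generators in the odd degrees $2(m-p)+1, 2(m-p)+3, \dots, 2m-1$, which is torsion-free with Poincar\'e series also equal to $P(t)$. Torsion-freeness of $H^*$ plus the universal coefficient theorem give torsion-freeness of $H_*$, and equality of ranks in every degree forces the cellular boundary maps to vanish. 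Hence the Schubert cycles $p_{m,p\,*}(\overline{S_{\bm}})$ form a free $\Z$-basis of $H_*(V_p(\C^m); \Z)$ and their Kronecker duals form a free $\Z$-basis of $H^*(V_p(\C^m); \Z)$, yielding \eqref{Eqn8.1a} as an isomorphism of graded abelian groups.

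Third, for each $m_1 \in \{m - p + 1, \dots, m\}$ the simple Schubert class $p_{m,p\,*}(\overline{S_{(m_1)}})$ determines a Kronecker dual $\alpha_{m_1} \in H^{2m_1 - 1}(V_p(\C^m); \Z)$, and I must show these serve as exterior algebra generators. The cleanest route is to lift to $U_m$: in $H^*(U_m; \Z) = \gL^*\Z\langle e_1, e_3, \dots, e_{2m-1}\rangle$, the Kronecker duals of the single-pseudo-rotation cycles are precisely the primitive elements with respect to the Hopf algebra structure, hence are algebra generators---this is the classical computation. The fibration $U_{m-p} \to U_m \to V_p(\C^m)$ sends simple Schubert cycles in $U_m$ with $m_1 > m-p$ to simple Schubert cycles in $V_p(\C^m)$ and correspondingly sends the generators $e_{2m_1 - 1}$ to generators of $H^*(V_p(\C^m); \Z)$, while killing the other $e_{2j-1}$ for $j \leq m-p$; this identifies the $\alpha_{m_1}$ as exterior algebra generators. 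The main obstacle is precisely this last identification: the Poincar\'e-series argument gives the abstract algebra structure but not the geometric meaning of the generators, and one must use either the Hopf algebra primitivity in $U_m$, or a direct restriction to the pseudo-rotation sphere $\overline{S_{(m_1)}} \cong S^{2m_1 - 1}$ (on which all decomposables of degree $2m_1 - 1$ vanish) to pin down $\alpha_{m_1}$ modulo decomposables.
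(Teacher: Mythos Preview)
Your argument is correct and follows essentially the same route the paper takes by reference to \cite[Thm.~6.1]{D4}: the Whitehead--Miller--Yokota cell decomposition together with the Poincar\'e-series comparison against the known torsion-free cohomology from \cite{MT} forces the cellular differentials to vanish, and the simple Schubert classes are then identified with algebra generators. The paper simply cites the earlier square-matrix case; you have written out the details.

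One correction in your third step: the closure $\overline{S_{(m_1)}}$ is not $S^{2m_1-1}$ but the unreduced suspension $\Sigma\,\CP^{m_1-1}$. Indeed the map $(\theta,[v])\mapsto A_{(\theta,v)}$ is injective for $\theta\in(0,2\pi)$ (the pseudo-rotation determines both its nontrivial eigenvalue $e^{i\theta}$ and the eigenline $\C\langle v\rangle$), while at $\theta=0,2\pi$ the entire $\CP^{m_1-1}$ collapses to the identity. Your restriction argument survives this correction unchanged: all cup products vanish on a suspension, so every decomposable class of degree $2m_1-1$ restricts to zero on $\overline{S_{(m_1)}}$, whereas $\alpha_{m_1}$ pairs to $1$ with its fundamental class by construction of the Kronecker dual. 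Hence $\alpha_{m_1}$ is indecomposable and may be taken as the generator $e_{2m_1-1}$ (up to sign and decomposables). This is the same mechanism used in \cite[\S 6]{D4} for $SU_m$, and your lift-to-$U_m$ alternative via Hopf-algebra primitivity is equally valid.
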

\begin{proof}
The computation of $H^*(V_p(\C^m)$ is given in \cite[Thm. 3.7]{D4}.  As it is a 
homotopy model for the complement \eqref{Eqn8.1a} follows.  \par
Second, that the Schubert cycles form a basis for the homology follows exactly as in the proof of \cite[Thm 6.1]{D4}, as does the proof that the Kronecker duals to the simple Schubert cycles provide homogeneous generators of the exterior algebra.   
\end{proof}
\subsubsection*{Cohomology of the Link} \hfill
\par
As a consequence of Theorem \ref{Thm8.10a}, we obtain the following conclusion for the link.
\begin{Thm}
\label{Thm8.4a}
For the variety of singular $m \times p$ complex matrices, $\cD_{m, p}$ 
(with $m > p$), the cohomology of the link is given (as a graded vector space) 
as the upper truncated  cohomology $H^*(M_{m, p} \backslash \cV_{m, p},\bk)$ 
given in \eqref{Eqn8.1a} and shifted by $p^2-2$.  \par
The Alexander duals of the Schubert cycles of nonmaximal dimension give a basis for the cohomology of the link.
\end{Thm}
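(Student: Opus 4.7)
The plan is to derive the link cohomology from the complement cohomology via a combination of Alexander duality and Poincaré duality on the compact model, mimicking the argument used in \cite[Prop.~1.9]{D3} (and in the proof of Theorem~\ref{Thm8.7}). First, note that $\cD_{m,p} \subset M_{m,p}(\C) = \C^n$ with $n = mp$, and the link $L(\cD_{m,p}) \subset S^{2n-1}$ is a compact real semi-algebraic subset whose complement in $S^{2n-1}$ is homotopy equivalent (via radial projection) to $M_{m,p}(\C) \setminus \cD_{m,p}$. Alexander duality then yields
$$ \tilde{H}^i(L(\cD_{m,p}); \bk) \,\, \simeq \,\, \tilde{H}_{2n-2-i}(M_{m,p}(\C) \setminus \cD_{m,p}; \bk). $$

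Next I would replace the complement by its compact homotopy model. By the Gram-Schmidt argument recalled above, $M_{m,p}(\C) \setminus \cD_{m,p} \simeq V_p(\C^m)$, a closed orientable real manifold of dimension $d = 2mp - p^2$. Poincaré duality on $V_p(\C^m)$ converts the right side from homology to cohomology with a total shift of $d$, and combining this with Alexander duality yields
$$ \tilde{H}^i(L(\cD_{m,p}); \bk) \,\, \simeq \,\, H^{i - (p^2 - 2)}(V_p(\C^m); \bk), $$
since $2n - 2 - d = 2mp - 2 - (2mp - p^2) = p^2 - 2$. This identifies the claimed shift. By Theorem~\ref{Thm8.10a}, the right side is the exterior algebra in \eqref{Eqn8.1a} regraded by $p^2 - 2$.

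To interpret the \emph{upper truncation}, I would argue by dimension. The unique top-degree class of the complement, which is the product $e_{2(m-p)+1}\cdots e_{2m-1}$ of degree $d = 2mp - p^2$ and is Kronecker-dual to the fundamental class of the top Schubert cycle $\overline{S_{(m-p+1,\ldots,m)}}$, corresponds under the identification above to a link cohomology class in degree $d + (p^2 - 2) = 2mp - 2$. Since the real dimension of $L(\cD_{m,p})$ is $2n - 1 - 2(m-p+1) = 2mp - 2m + 2p - 3 < 2mp - 2$ (as $m > p \geq 2$), this top class must vanish in $H^*(L)$, giving the truncation. A parallel check shows that the next-highest surviving complement class, of degree $d - (2(m-p)+1)$, shifts precisely into degree $2mp - 2m + 2p - 3$, matching the top link dimension; thus exactly the top class is removed. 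Equivalently, the Alexander duals of the Schubert cycles $p_{m,p,*}(\overline{S_{\bm}})$ of non-maximal dimension (Theorem~\ref{Thm8.10a}) provide the claimed basis for $H^*(L(\cD_{m,p}); \bk)$.

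The main obstacle is bookkeeping: ensuring that the Alexander-Poincaré shift interacts correctly with the reduced cohomology conventions (so that the $H^0$ versus $H^d$ ends of the duality are handled properly) and verifying that, among all monomials in the exterior algebra, only the single top product exceeds the link dimension after shifting. This follows directly from the dimension count above, but it is the essential point that turns the abstract duality into the precise statement of \lq\lq upper truncation in the top degree, shifted by $p^2 - 2$.\rq\rq
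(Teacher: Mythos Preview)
Your approach is correct and is essentially the one the paper has in mind: the paper states Theorem~\ref{Thm8.4a} as an immediate consequence of Theorem~\ref{Thm8.10a} without a written proof, and elsewhere (proof of Theorem~\ref{Thm8.7}, and the parallel passage in the proof of Theorem~\ref{Thm8.7a}) it points to \cite[Prop.~1.9]{D3}, which is exactly the Alexander duality plus Poincar\'e duality on the compact model argument you carry out. Your shift computation $2mp-2-(2mp-p^2)=p^2-2$ and the identification of the Schubert-cycle basis via Theorem~\ref{Thm8.10a} are both on the mark.

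One small simplification: you justify the upper truncation by a dimension count on $L(\cD_{m,p})$, and then check that the second-highest class lands exactly in degree $\dim_{\R} L(\cD_{m,p})$. That consistency check is fine, but it does not by itself prove that the second-highest class is nonzero in $H^*(L)$. The cleaner and complete argument is the one implicit in your own setup: Alexander duality is in \emph{reduced} (co)homology, so $\tilde H^{2mp-2}(L)\simeq \tilde H_0(M_{m,p}\setminus\cD_{m,p})=0$ automatically, while for every $i\neq 2mp-2$ the composite of Alexander duality with Poincar\'e duality on $V_p(\C^m)$ is an isomorphism $\tilde H^i(L)\simeq H^{i-(p^2-2)}(V_p(\C^m))$. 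That already gives both the truncation and the survival of all other classes in one stroke; your dimension count then serves as a reassuring cross-check rather than the load-bearing step.
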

\subsection*{Kite Spaces and Maps for $m \times p$ Matrix Singularities with $m \not = p$:}  \hfill 
\par
\begin{Definition}
\label{Def4.1a}  For $m \times p$ matrices with $m > p$, with $p \neq 1$ and the reverse standard flag of subspaces of $\C^m$, the corresponding {\em linear kite subspace of length $\ell$} is the linear subspace of the space of matrices defined as follows:
For $M_{m, p}(\C)$, it is the linear subspace $\bK_{m, p}(\ell)$ spanned by \par
$$  \{E_{i, j} : r + 1 \leq i \leq m, r + 1 \leq j \leq p\} \cup \{E_{i, i} : 1 \leq i \leq r\}$$
where $r = p - \ell$.  \par
Furthermore, we refer to the germ of the inclusion $\iti_ {m, p}(\ell) : \bK_ {m, p}(\ell), 0 \to M_{m, p}(\C), 0$, for each of the three cases as a {\em linear kite map of size $\ell$}.  Furthermore, a germ which is $\cK_M$ equivalent to $\iti_ {m, p}(\ell)$ will be refered to as an {\em unfurled kite map of length $\ell$}.  We also say that a germ $f_0 : \C^n, 0 \to M_{m, p}(\C), 0$ contains a kite map of length $\ell$ if there is an embedding $g : \bK_ {m, p}(\ell), 0 \to \C^n, 0$ so that $f_0 \circ g$ is an unfurled kite map of colength $\ell$.  
\end{Definition}
The general form of elements, \lq\lq the kites\rq\rq\, in the linear kite subspaces have the form given in \eqref{Eqn8.15a} 
\begin{equation}
\label{Eqn8.15a}
Q_{\ell, m - \ell} \,\, = \,\, \begin{pmatrix} D_{r}  & 0_{m - r, p - r}  \\
		0_{m - r, p} & A_{m - r, p - r} 
\end{pmatrix}
\end{equation}
where $r = p - \ell$ and $A_ {m - r, p - r} $ is an $(m - r) \times (p - r)$-matrix which denotes an arbitrary matrix in $M_{m - r, p - r}(\C)$.  Also, $0_{q, s}$ denotes a $0$-matrix of size $q \times s$ and $D_{r}$, an arbitrary $r \times r$ diagonal matrix.  
The general element is exhibited in Figure \ref{fig:altkitefiga}.  
\begin{Remark}
\label{Rem8.15a}
Although the body of the kite is not square, the length $\ell$ denotes the rank of a generic matrix in the body, which is consistent with the square case when $m = p$.  We note that to be consistent with the form of the matrices for the group representation of the complement and the Schubert decomposition for the nonsquare case, the kite is 
\lq\lq upside down\rq\rq.  However, elements of $\cK_M$ allow for the composition with invertible matrices $GL_m$ and $GL_p$ with entries in the local ring of germs.  This allows for a linear change of coordinates so the kite can be inverted to the expected form as for the $m \times m$ case.  
\end{Remark}
\par 
\begin{figure}
$$ \begin{pmatrix} * & \cdots & 0  & 0  & \cdots & 0 \\
	0 & \cdots & 0 & 0  & \cdots & 0 \\
0 & \cdots & * & 0  & \cdots & 0 \\
0 & \cdots & 0 & *  & \cdots &* \\
0 & \cdots & 0 & * & \ddots  & * \\
0 & \cdots & 0 & * & \cdots  & *
\end{pmatrix}
$$
\caption{Illustrating the form of elements of a linear kite space of length $\ell$ in the space of general $m \times p$ matrices with $r = p - \ell$. The upper $r \times r$ left matrix is a diagonal matrix with arbitrary entries, and the lower right matrix is a general matrix of size $(m -r) \times (p - r)$.}
\label{fig:altkitefiga}
\end{figure}
\par
We have an analogue of the detection result for case of $m \times m$ general matrices. 
\begin{Thm}
\label{Thm8.7a}
Let $f_0 : \C^n, 0 \to M_{m, p}(\C), 0$ define a matrix singularity.  If $f_0$ contains a kite map of length $\ell$, then the characteristic cohomology of the complement 
$\cC_{m, p}(f_0, \bk)$, for a field $\bk$ of characteristic $0$, contains the exterior algebra given by
\begin{equation}
\label{Eqn8.7a}
  \gL^*\bk\langle e_{2(m-p)+1}, e_{2(m-p)+3}, \dots , e_{2(m - p) +2 \ell - 1}\rangle  
\end{equation}
and each $e_j$ has degree $j$.  \par 
Furthermore, the characteristic cohomology of the link $\cB_{m, p}(f_0, \bk)$, as a 
graded vector space contains the graded subspace given by truncating the top degree of the exterior subalgebra \eqref{Eqn8.7a} of $\cC_{m, p}(f_0, \bk)$ and shifting by $2n - 2 - \ell \cdot (2(m - p) + \ell)$.  
\par 
For the complement, $\bk$ may be replaced by any coefficient ring $R$.  
\end{Thm}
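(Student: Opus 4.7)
The plan is to adapt the argument for Theorem \ref{Thm8.7} to the non-square setting, using the Stiefel variety $V_p(\C^m)$ as the compact homotopy model for $M_{m,p}(\C)\setminus \cD_{m,p}$ (as recalled before Theorem \ref{Thm8.10a}) in place of the homogeneous spaces $U_m$, $U_m/O_m$, $U_{2k}/Sp_k$. First I would invoke the $\cK_{\cV}$-invariance of $\cC_{m,p}(f_0,\bk)$ (the analogue of Corollary \ref{Cor2.3} for the complement) to reduce, without loss of generality, to the case where $f_0$ contains the linear kite map $\iti_{m,p}(\ell)$; by the Second Detection Lemma \cite[Lemma 3.4]{D6} it then suffices to exhibit a vanishing compact model for $M_{m,p}(\C)\setminus\cD_{m,p}$ on which $\iti_{m,p}(\ell)$ detects the exterior algebra in \eqref{Eqn8.7a}.

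Next, mimicking Proposition \ref{Prop4.6}, I would set $\cP_{m,p} \subset M_{m,p}(\C)\setminus\cD_{m,p}$ to be the image of the Stiefel variety $V_p(\C^m)$ under the Gram--Schmidt retraction; elements $A \in \cP_{m,p}$ have bounded Euclidean norm $\|A\| = \sqrt{p}$, so for any $\gd>0$ and any $0<a<\gd/\sqrt{p}$ the rescaling $\Phi(A,t) = t\cdot A$ embeds $\cP_{m,p}\times(0,a)$ into $B_\gd \setminus \cD_{m,p}$ and is a homotopy equivalence to $M_{m,p}(\C)\setminus\cD_{m,p}$ onto each slice. This yields the vanishing compact model for the complement.

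The key geometric observation is that for the linear kite space $\bK_{m,p}(\ell)$, the complement $\bK_{m,p}(\ell)\setminus \cD_{m,p}$ is diffeomorphic (via the obvious splitting in \eqref{Eqn8.15a}) to $(\C^*)^{p-\ell}\times (M_{m-p+\ell,\,\ell}(\C)\setminus \cD_{m-p+\ell,\,\ell})$. On compact homotopy models this becomes an inclusion of Stiefel varieties $V_\ell(\C^{m-p+\ell}) \hookrightarrow V_p(\C^m)$ (up to a homotopy equivalence absorbing the torus factor) compatible, after multiplication by $a$, with the diagram analogous to \eqref{CD5.5}. I would then invoke the Whitehead--Miller--Yokota Schubert decomposition for Stiefel varieties used in Theorem \ref{Thm8.10a}: this decomposition is built from ordered factorizations by pseudo-rotations about a reverse flag, and it respects the inclusions $V_\ell(\C^{m-p+\ell}) \hookrightarrow V_p(\C^m)$, sending simple Schubert cells to simple Schubert cells of the same index and killing those indexed below $m-p+1$. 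Kronecker duality then shows that the induced pull-back is an isomorphism from the exterior subalgebra generated by $e_{2(m-p)+1},\dots,e_{2(m-p)+2\ell-1}$ onto the full cohomology of $V_\ell(\C^{m-p+\ell})$. Combined with the vanishing compact model and applied through the Second Detection Lemma, this proves the assertion for $\cC_{m,p}(f_0,\bk)$; since the Stiefel cohomology is free over $\Z$ by Theorem \ref{Thm8.10a}, the same argument runs with any coefficient ring $R$.

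For the link statement I would argue exactly as at the end of the proof of Theorem \ref{Thm8.7}: the Kronecker dual $\widetilde{\gG}_{m,p}(f_0,\bk)$ of the detected exterior subalgebra, transported via Alexander duality in $S^{2n-1}$, gives a graded subspace of $H^*(L(\cV_0);\bk)$. The exterior algebra in \eqref{Eqn8.7a} has top degree equal to the real dimension $\ell(2(m-p)+\ell)$ of $V_\ell(\C^{m-p+\ell})$, so Poincar\'e duality inside the exterior algebra (as in the proof of \cite[Prop.\ 1.9]{D3}) identifies the dual as the upper truncation, shifted by $2n - 2 - \ell(2(m-p)+\ell)$. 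The main obstacle I expect is the bookkeeping in step three: verifying that the inclusion of Stiefel varieties, together with the torus factor collapse, really does restrict the Schubert decomposition compatibly with the indexing on generators $e_{2j-1}$. Once that compatibility is in hand, the rest follows from formal consequences of the Detection Lemma and Alexander duality as in Theorem \ref{Thm8.7}.
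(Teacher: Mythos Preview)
Your proposal is correct and follows essentially the same route as the paper: reduce via $\cK_{\cV}$-invariance to the linear kite map, use the rescaled Stiefel variety $t\cdot V_p(\C^m)$ as a vanishing compact model, detect the exterior subalgebra \eqref{Eqn8.7a} through the inclusion $V_\ell(\C^{m-p+\ell}) \hookrightarrow V_p(\C^m)$ (whose effect on generators is read off from the Schubert decomposition and Kronecker duality as in Theorem \ref{Thm8.10a}), apply the Second Detection Lemma, and then handle the link by Alexander duality together with Poincar\'e duality inside the exterior algebra exactly as in \cite[Prop.\ 1.9]{D3}. The only cosmetic difference is that you make the splitting $(\C^*)^{p-\ell}\times(M_{m-p+\ell,\ell}(\C)\setminus\cD_{m-p+\ell,\ell})$ of the kite complement explicit, whereas the paper phrases the same detection purely in terms of the composed Stiefel inclusions $\iti_{m-1,p-1}\circ\cdots$; your worry about the torus factor is harmless since $(\C^*)^{p-\ell}$ contracts onto a point compatible with the inclusion and carries no cohomology in the relevant degrees.
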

\begin{proof} \par 
\par
The line of proof follows that for the $m \times m$ general case.  \par
Under the inclusion $\iti_{m-1,p-1}: V_{p-1}(\C^{m-1}) \hookrightarrow V_{p}(\C^{m})$, the 
identification of the cohomology classes with Kronecker duals of the Schubert cycles implies 
$$ \iti_{m-1,p-1}^*(e_{2(m-p+ j) -1}) = e_{2(m-p+ j) -1} \, \text{for}\,  1 \leq j \leq p-1 \, \text{and}\, \iti_{m-1,p-1}^*(e_{2m -1}) = 0 \, .$$  
If we compose successive inclusions $\ell$ times to give  $\iti_{m-\ell,p-\ell}: V_{p-\ell}(\C^{m-\ell}) \hookrightarrow V_{p}(\C^{m})$, then the induced map on cohomology has image the algebra given in \eqref{Eqn8.7a}.  Thus, $V_{p}(\C^{m})$ provides a compact model for the complement, and the composition $\iti_{m - 1, p - 1} \circ \iti_{m - 2, p - 2} \circ \cdots \circ \iti_{m - r, p-r}$ with $r = p - \ell$ detects the subalgebra in 
\eqref{Eqn8.7a}.   
\par
Now using the vanishing compact model $t\cdot V_{p}(\C^{m})$, we can follow the same reasoning as for the $m \times m$ case using the functoriality and invariance under $\cK_{\cD_{m, p}}$, and apply the Second Detection Lemma to obtain the result. \par
Then, as the exterior algebra satisfies Poincare duality under multiplication, we can deduce the result for $\cB_{m, p}(f_0, \bk)$ using the same argument in the proof of \cite[Prop. 1.9]{D3} where for the shift $2n - 2 - \dim_{\R} K$ we replace $\dim_{\R} K$ by the top degree of the algebra in \eqref{Eqn8.7a}.  This is the same as 
$\dim_{\R} V_{p - r}(\C^{m - r})$, which is 
$$ 2(p - r)((m - r) -(p - r)) + (p-r)^2 \,\, = \,\, (p-r)(2(m - p) - r) \,\, = \,\, \ell (2(m - p) + \ell) \, .$$
\end{proof}
\par
\begin{Example}
\label{Ex8.1a}
Consider an example of a matrix singularity which is given by $f_0 : \C^{12}, 0 \to M_{4, 5}(\C), 0$ defined by the matrix in Figure \ref{fig:unfkitefigex4} for which all $g_{i, j}(\bx,0) = 0$.
\begin{figure}[h]
$$ 
\begin{pmatrix} 
x_{1,1}  & x_{1,2}   & x_{1,3} & g_{1, 4}(\bx, \by)  & g_{1, 5}(\bx, \by)   \\
x_{2,1}  & x_{2,2} & x_{2,3} & g_{2,4}(\bx, \by) &  g_{2, 5}(\bx, \by) \\
g_{3, 1}(\bx, \by) & g_{3, 2}(\bx, \by)  & y_3  & x_{3,4}  & y_4 \\
y_1 & y_2 & g_{4, 3}(\bx, \by) & g_{4,4}(\bx, \by) &  x_{4,5}  
\end{pmatrix}
$$
\caption{An example of a $4 \times 5$ matrix singularity $f_0$, with $g_{i, j}(\bx, 0) \equiv 0$ for each $(i, j)$.  It contains a kite map of colength $2$ given when all $y_i = 0$.}
\label{fig:unfkitefigex4}
\end{figure}
For $\by = (y_1, y_2, y_3, y_4)$, when $\by = 0$, we see that $f_0$ contains a kite map of colength $2$.  Then, Theorem \ref{Thm8.7a} implies that $\cC_{\cD_{4, 5}}(f_0, \Z)$ contains a subalgebra $\gL^*\Z\langle e_3, e_5 \rangle$. 
Also, by Theorem \ref{Thm8.7a}, $\cB_{\cD_{4, 5}}(f_0, \Z)$ contains as a subgroup the subalgebra upper truncated and then shifted by 
$2\cdot 12 - 2 - (4 - 2)(2(5 - 4) + 2) = 14$.  Thus, the classes 
$\{ 1, e_3, e_5\}$ are shifted by $14$ to give classes in degrees 
$14, 17, 19$.  As $\cV_0 = f_0^{-1}(\cD_{4, 5})$ has codimension $2$, the link 
$L(\cV_0)$ has dimension $19$ and the characteristic cohomology class in degree $19$ generates the Kronecker dual to the fundamental class of $L(\cV_0)$.  
\end{Example}
\par
\section{Cohomological Relations between Local Links via Restricted Kite Spaces}
\label{S:sec8b}
\par 
Lastly, it is still not well understood how the structure of the strata for the varieties of singular matrices contributes to the (co)homology of the links for the various types of matrices.  We use kite spaces for all of the cases to determine the relation between the cohomology of local links for strata with the cohomology of the global link.  This includes as well the relation between the local links for strata with local links of strata of higher codimension.  This is via the relative Gysin homomorphism defined as 
\cite[(1.10)]{D6}, which is an analog of the Thom isomorphism theorem in these cases.  
\par
We do so by explaining how the kite subspaces provide transverse sections to the strata of the varieties of singular matrices for all three cases of $m \times m$ matrices and also for general $m \times p$ matrices.  To consider all cases simultaneously, we denote the corresponding space of matrices by $M$ and the variety of singular matrices by $\cD_{*}^{(*)}$.  Also, we consider the kite subspace of length $\ell$ of appropriate type which we denote by $\bK_{*}^{*}(\ell)$.  For the $m \times m$ cases, we also let $r = m - \ell$ (which is the same as $p - \ell$ when $m = p$).  
\par 
We consider an affine subspace obtained by choosing fixed nonzero values at the 
entries in the tail (e.g. the value $1$).  When the entries in the body of the kite are $0$, we obtain a matrix $A$ of rank $r$ and hence corank $\ell$.  Then, the resulting space we consider has the form $A + M^{\prime}$ where $M^{\prime}$ denotes one of the spaces $M_{\ell}(\C)$, $M_{\ell}^{(sy)}(\C)$, $M_{\ell}^{(sk)}(\C)$, or 
$M_{m - r, p - r}(\C)$ which is embedded, via a map denoted by $\iti$, as the body of the kite.  
This provides a normal section to the stratum $\gS_{\ell}$ of matrices of corank $\ell$ through $A$.  We refer to this affine subspace as a {\em restricted kite space}.  We let 
$\cD_{*}^{(*) \prime}$ denote the variety of singular matrices in $M^{\prime}$.  Then, in 
a sufficiently small tubular neighborhood $T$ of $\gS_{\ell}$ we obtain $\cD_{*}^{(*)} \cap T$ is diffeomorphic to $\gS_{\ell} \times (\cD_{*}^{(*) \prime} \cap B_{\gevar})$ for sufficiently small 
$\gevar > 0$.  We refer to $L(\cD_{*}^{(*) \prime})$ as the {\em local link of the stratum $\gS_{\ell}$}.  \par 
Then $\iti$ induces an inclusion $\iti : \cD_{*}^{(*) \prime} \cap B_{\gevar} \subset \cD_{*}^{(*)}$.  There is the induced map $\iti^*$ on cohomology which sends the exterior algebra giving the cohomology of $M \backslash \cD_{*}^{(*)}$ to the algebra \eqref{Eqn8.7a}.  This is a consequence of the proofs of Theorems \ref{Thm8.7}
 and \ref{Thm8.7a}.  Using this we have consistent monomial bases for the cohomology of the complement.  This allows us to define consistent Kronecker pairings giving a 
well-defined relative Gysin homomorphism (as defined in \cite[(1.10)]{D6}).  There is the following relation between the cohomology of the local link $L(\cD_{*}^{(*) \prime})$ and the link $L(\cD_{*}^{(*)})$.
\begin{Corollary}
\label{Cor8.1a}
The relative Gysin homomorphism 
$$\iti_* : H^*(L(\cD_{*}^{(*) \prime}); \bk) \to H^{*+ q}(L(\cD_{*}^{(*)}); \bk)$$
 increases degree by $q = \dim_{\R} M - \dim_{\R} M^{\prime}$, which in the various cases equals for the $m \times m$ cases: $2(m^2 - \ell^2)$ for the general matrices; 
$(m - \ell)(m + \ell +1)$ for symmetric matrices,  $(m - \ell)(m + \ell -1)$ for skew-symmetric matrices (with $m$ and $\ell$ even); and for $m \times p$ matrices 
$2(p^2 - \ell^2)$.  \par 
It is injective and sends the Alexander dual of the Kronecker dual of a class corresponding to a monomial in the algebra \eqref{Eqn8.7a} to the corresponding Alexander dual of the Kronecker dual of the image of that class as an element of the cohomology of the complement $M \backslash \cD_{*}^{(*)}$.
\end{Corollary}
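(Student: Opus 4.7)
The plan is to derive the corollary from the structure already established in Theorems \ref{Thm8.7} and \ref{Thm8.7a}, combined with Alexander duality and the defining formula \cite[(1.10)]{D6} for the relative Gysin homomorphism.

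First I would verify that the restricted kite space $A + M^{\prime}$ is a genuine normal slice to the stratum $\gS_{\ell}$ at $A$, so that the tubular neighborhood identification $\cD_*^{(*)} \cap T \simeq \gS_{\ell} \times (\cD_*^{(*) \prime} \cap B_{\gevar})$ asserted in the text is a local diffeomorphism: the body $\iti(M^{\prime})$ of the kite is linearly complementary to $T_A \gS_{\ell}$ inside $M$, because fixing the tail entries to nonzero constants parameterizes the rank-$r$ directions inside $\gS_{\ell}$ while the body entries parameterize the transverse directions to the rank stratum. This also identifies the local link of $\gS_{\ell}$ with $L(\cD_*^{(*) \prime})$. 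The codimension of $M^{\prime}$ in $M$, which by \cite[(1.10)]{D6} is exactly the degree shift of the relative Gysin homomorphism, is then computed case by case: $2m^2 - 2\ell^2$ for general $m \times m$; $m(m+1) - \ell(\ell+1) = (m-\ell)(m+\ell+1)$ for symmetric; $m(m-1) - \ell(\ell-1) = (m-\ell)(m+\ell-1)$ for skew-symmetric (with $m, \ell$ even); and analogously for the $m \times p$ case.

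The main step will be to identify $\iti_*$ on monomial bases. By Theorems \ref{Thm8.7} and \ref{Thm8.7a}, the cohomology algebras $H^*(M \backslash \cD_*^{(*)}; \bk)$ and $H^*(M^{\prime} \backslash \cD_*^{(*) \prime}; \bk)$ are exterior, and the inclusion $\iti$ of the restricted kite body induces a ring map $\iti^*$ that sends the lowest-degree exterior generators onto those of the smaller algebra and kills the remaining generators; this is the same compatibility of successive Schubert inclusions that drives the proofs of those theorems. Dualizing via the Kronecker pairing yields a linear map on the monomial homology bases of the two complements, and applying Alexander duality on both sides transports these classes to the links $L(\cD_*^{(*) \prime})$ and $L(\cD_*^{(*)})$. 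The definition of the relative Gysin homomorphism \cite[(1.10)]{D6} is arranged so that it fits into the commutative square relating the two Alexander dualities, yielding the asserted formula that $\iti_*$ sends the Alexander dual of the Kronecker dual of a monomial class to the Alexander dual of the Kronecker dual of its $\iti^*$-image in $H^*(M \backslash \cD_*^{(*)}; \bk)$.

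Injectivity then follows automatically: the ring map $\iti^*$ on the cohomology of complements is surjective onto the monomial exterior subalgebra of \eqref{Eqn8.7a}, so the Kronecker dual map on the monomial homology basis is injective, and Alexander duality transports this injectivity to the link. The main obstacle, in my view, will be checking carefully that the conventions adopted for Kronecker and Alexander duality in Part I make the relative Gysin homomorphism act on the monomial bases exactly as stated, rather than up to sign or reidentification; once this bookkeeping is settled, all the geometric and algebraic content is already supplied by Theorems \ref{Thm8.7} and \ref{Thm8.7a} together with the transverse-slice property of the restricted kite.
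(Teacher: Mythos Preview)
Your proposal is correct and follows essentially the same route as the paper: both argue that $\iti^*$ on the cohomology of complements is surjective onto the exterior subalgebra, pass to the Kronecker dual $\iti_*$ on homology to obtain injectivity, and then compose with the Alexander duality isomorphisms on each side to transport this to the links, with the degree shift read off as $\dim_{\R} M - \dim_{\R} M^{\prime}$. Your additional verification that the restricted kite space is a genuine normal slice and your explicit mention of the commuting square for the relative Gysin homomorphism are elaborations the paper leaves implicit, but the core argument is the same.
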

\begin{proof}  By the above remarks, there is defined the relative Gysin homomorphism.  If $\iti$ denotes the inclusion of the reduced kite space into the space of matrices, then the induced map on cohomology of the complements, denoted $\iti^*$, (with coefficients $\bk$ a field of characteristic $0$) is surjective.  We use the identification of the monomials $e_{\bm}$ with the Kronecker duals denoted $e_{\bm}^*$.  Then, the inclusion  $\iti_*$ is the dual of $\iti^*$.  Thus, the dual homomorphism for homology $\iti_*$ is injective.  When this is composed with Alexander isomorphisms (via the Kronecker pairings), it remains injective.  
By the properties of the corresponding cohomology classes of the links resulting from applying Alexander duality have the effect of raising degree by the difference $\dim_{\R} M - \dim_{\R} M^{\prime}$ for each of the four types.  These are then computed to give the stated degree shifts.  
\end{proof}
\par
We also mention that there is an analogous version of this corollary for the case of the local link for a stratum $\gS_{\ell^{\prime}}$ included in the local link of a stratum $\gS_{\ell}$ for $\ell^{\prime} < \ell$.   As an example we consider
\begin{Example}
\label{Ex8.2a}
For the stratum $\gS_2 \subset Sym_5(\C)$, the local link has reduced cohomology 
group isomorphic to $\widetilde {\gL*\bk}\langle e_1, e_5\rangle[4]$.  However, the effect of Alexander duality on elements does not correspond to a shift.  The reduced cohomology of the local link complement is spanned by the generators $\{ e_1, e_5, e_1e_5\}$ with Kronecker duals denoted $\{ e_{1\, *}, e_{5\, *}, (e_1e_{5})_*\}$.  Then, the corresponding Alexander dual generators for the reduced cohomology of the local link, denoted $\{\widetilde{e_1}, \widetilde{e_5}, \widetilde{e_1e_5}\}$, have degrees in cohomology $9, 5, 4$ in that order.  Note under the shift representation 
$\widetilde{e_1e_5}$ corresponds to the shift of $1$.  \par 
Also, the link $L(\cD_5^{(sy)})$ has cohomology group $\widetilde {\gL*\bk}\langle e_1, e_5, e_9\rangle[13]$.  Then, as 
$\iti^*$ is surjective, $\iti_*$ maps the Kronecker duals $\{e_{1\, *}, e_{5\, *}, (e_1e_{5})_*\}$ to homology classes of the same degrees for the complement $Sym_5(\C) \backslash \cD_5^{(sy)}$.  
Then, these elements correspond under Alexander duality to cohomology classes $\{\widetilde{e_1^{\prime}}, \widetilde{e_5^{\prime}}, \widetilde{(e_1e_5)^{\prime}}\}$ for the link $L(\cD_5^{(sy)})$, having degrees $27, 23, 22$.  We see that the increase in degree is $2 (15 -6) = 18$ as asserted for the relative Gysin homomorphism..  
\par 
However, one key point to note is that for the cohomology group of the link represented by the truncated and shifted exterior algebras, the relative Gysin homomorphism does not map the shifted classes to the corresponding shifted classes.  For example, for the local link of $\gS_2 \subset Sym_5(\C)$, $e_1$ corresponds to $\tilde{e_1}$ which maps to a cohomology class of degree $27$, while for the link $L(\cD_5^{(sy)})$ , $e_1$ corresponds via the shift representation to a cohomology class in the link of degree $14$. 
\end{Example}
\par
\begin{Remark} 
\label{Rem8.12}
For $m \times p$ finitely $\cK_M$-determined matrix singularities $f_0 : \C^n, 0 \to M_{m, p}(\C), 0$, if $n < | 2 (m-p +2)|$, then by transversality, $\cV_0$ has an isolated singularity and so a stabilization provides a Milnor fiber as a particular smoothing.  As yet there does not appear to be a mechanism for showing how this Milnor fiber inherits topology from 
$M_{m, p}(\C) \backslash \cD_{m, p}$.  However, for $(m, p) = (3, 2)$, Fr\"{u}hbis-Kr\"{u}ger and Zach \cite{F}, \cite{Z}, \cite{FZ} have shown that for the resulting Cohen-Macaulay $3$-fold singularities in $\C^5$, the Milnor fiber has Betti number $b_2 = 1$, allowing the formula of Damon-Pike \cite[\S 8]{DP} to yield an algebraic formula for $b_3$.  It remains to be understood how this extends to larger $(m, p)$. 
\end{Remark}

\end{document}

\subsection*{General $m \times p$ Matrix Singularities with $m \geq p$:}  \par 
For $m \times p$ matrix singularities with $m \neq p$, with neither $= 1$, the variety of singular matrices $\cD_{m, p}$ is not a hypersurface and therefore does not have a Milnor fiber.  However, the complement has a compact homotopy model given by a Stiefel manifold; and, the combined work of J. H. C. Whitehead \cite{W}, C. E. Miller \cite{Mi}, and I. Yokota \cite{Y} give a Schubert cell decomposition  for it which corresponds to the computation of  the cohomology of the complement given e.g. in \cite[Thm. 3.10]{MT} (or see e.g. \cite[\S 8]{D3}).  Thus, for appropriate coefficients, the form of both 
$\cC_{\cV}(f_0, R)$ and $\cB_{\cV}(f_0, \bk)$ can be given for $\cV = \cD_{m, p}$ and $f_0 : \C, 0 \to M_{m, p}(\C), 0$.  Then, the Stiefel manifolds can be used to define vanishing compact models.  Again the inclusion of the matrix subspaces can be used to detect nonvanishing characteristic cohomology for the complement and link. \par 
If $n < | 2 (m-p +2)|$, then by transversality, $\cV_0$ has an isolated singularity and so has a Milnor fiber for any smoothing.  There does not appear to be a mechanism for showing this Milnor fiber inherits topology from $M_{m, p}(\C)$.  However, for $(m, p) = (3, 2)$, Fr\"{u}hbis-Kr\"{u}ger and Zach \cite{F}, \cite{Z}, \cite{FZ} have shown that for the resulting Cohen-Macaulay $3$-fold singularities in $\C^5$, the Milnor fiber has $b_2 = 1$, allowing the formula of Damon-Pike \cite{DP2} to yield an algebraic formula for $b_3$.  It remains to be understood how this extends to larger $(m, p)$.

this factorization has the form of iterated \lq\lq 
Cartan conjugacies\rq\rq\, by pseudo-rotations. The decomposition respects 
the towers of Milnor fibers and symmetric spaces ordered by inclusions.  
Furthermore, the \lq\lq Schubert cycles\rq\rq, which are the closures of 
the Schubert cells, are images of products of suspensions of projective 
spaces (complex, real, or quaternionic as appropriate).  In the cases of 
general or skew-symmetric matrices the Schubert cycles have fundamental 
classes, and for symmetric matrices $\mod 2$ classes,  which give a basis 
for the homology.  They are also shown to correspond to the cohomology 
generators for the symmetric spaces.  For general matrices the duals of the 
Schubert cycles are represented as explicit monomials in the generators of 
the cohomology exterior algebra; and for symmetric matrices they are 
related to Stiefel-Whitney classes of an associated real vector bundle.\par 
Furthermore, for a matrix singularity of any of these types. the pull-backs of 
these cohomology classes generate a characteristic subalgebra of the 
cohomology of its Milnor fiber.    \par
We also indicate how these results extend to exceptional orbit 
hypersurfaces, complements and links, including a characteristic subalgebra 
of the cohomology of the complement of a matrix singularity.

\begin{figure}
$$
\begin{matrix} 
   &
     \begin{array}{ccc}   
       \overbrace{
         \hphantom{\begin{matrix} * \;\;\cdots\;\;* \end{matrix}}
       }^{ \ell}
  &
       \overbrace{
         \hphantom{\begin{matrix} 1\quad 0\end{matrix}}
       }^{2}
  &
        \hphantom{\begin{matrix} 1 & 0 \end{matrix}}
     \end{array}
   \\
     \begin{array}{r}    
       \textrm{$\ell$} \left\{\vphantom{\begin{matrix} 0 \\ 0 
\end{matrix}}\right. \\
       2 \left\{\vphantom{\begin{array}{c} * \\ * \end{array}}\right. \\
       \vphantom{ \begin{matrix} 0 \\ 0 \end{matrix}}
     \end{array}
     \mspace{-25mu}
   &
     \left(\begin{array}{c|c|c}
       \phantom{\begin{matrix} 0 \\ 0 \end{matrix}} & & \\
 	\hline
       * \;\;\cdots\;\;* & 1\quad 0 & \\
       * \;\;\cdots\;\;* & 0\quad * & \\
 	\hline
         & & \phantom{\begin{matrix} 1 & 0 \\ 0 & 1 \end{matrix}}\\
     \end{array}\right)
\end{matrix}
$$
\caption{A linear kite of size $\ell$ in the space of $m \times m$ general or symmetric matrices.}
\label{fig:ellprime}
\end{figure}

\vspace{5ex}

\begin{figure}
$$
\begin{matrix} 
   &
     \begin{array}{ccc}   
      \overbrace{
         \hphantom{\begin{matrix} \quad * \;\;\cdots\;\;* \end{matrix}}
       }^{ \ell}
  &
       \overbrace{
         \hphantom{\begin{matrix} 1\quad 0 & 0 \end{matrix}}
       }^{m-\ell}
  &
        \hphantom{\begin{matrix} 1 & 0 & 0 \end{matrix}}
     \end{array}
   \\
     \begin{array}{r}    
        \textrm{$\ell$} \left\{\vphantom{\begin{matrix} 0 \\ 0 \\ 0 
\end{matrix}}\right. \\  \\  \\
       m - \ell \left\{\vphantom{\begin{array}{c} * \\ * \\ * \end{array}}\right. \\
       \vphantom{ \begin{matrix} 0 \\ 0 \end{matrix}}
     \end{array}
     \mspace{-25mu}
   &
     \left(\begin{array}{c|c}
 * \;\;\cdots\;\;* & 0 \;\;\cdots\;\; 0  \\ 
\cdots\;\;\cdots & \cdots\;\;\cdots  \\ 
 * \;\;\cdots\;\;* & 0 \;\;\cdots\;\; 0  \\
 	\hline
       0 \;\;\cdots\;\; 0 & * \,\, \quad 0 \,\,\, \quad 0 \\
       \cdots\;\;\cdots & \,\, 0 \quad \ddots \quad 0  \\
       0 \;\;\cdots\;\; \, 0 & \, 0 \,\,\quad 0 \,\,\, \quad * \\
     \end{array}\right)
\end{matrix}
$$
\caption{A linear kite of size $\ell$ in the space of $m \times m$ general or symmetric matrices.}
\label{fig:gen.sym-kite}
\end{figure}
\vspace{10ex}

\begin{figure}
$$
\begin{matrix} 
   &
     \begin{array}{ccc}   
       \overbrace{
         \hphantom{\begin{matrix} * \;\;\cdots\;\;* \end{matrix}}
       }^{ \ell}
  &
       \overbrace{
         \hphantom{\begin{matrix} 1\quad 0\end{matrix}}
       }^{m - \ell}
  &
        \hphantom{\begin{matrix} 1 & 0 \end{matrix}}
     \end{array}
   \\
     \begin{array}{r}    
       \textrm{$\ell$ even} \left\{\vphantom{\begin{matrix} 0 \\ 0 
\end{matrix}}\right. \\
       m - \ell \left\{\vphantom{\begin{array}{c} * \\ * \end{array}}\right. \\
       \vphantom{ \begin{matrix} 0 \\ 0 \end{matrix}}
     \end{array}
     \mspace{-25mu}
   &
     \left(\begin{array}{c|c|c}
       \phantom{\begin{matrix} 0 \\ 0 \end{matrix}} & & \\
 	\hline
       * \;\;\cdots\;\;* & 1\quad 0 & \\
       * \;\;\cdots\;\;* & 0\quad * & \\
 	\hline
         & & \phantom{\begin{matrix} 1 & 0 \\ 0 & 1 \end{matrix}}\\
     \end{array}\right)
\end{matrix}
$$
\caption{A linear skew-kite of size $\ell = 2k$ in the space of $m \times m$ skew-symmetric matrices with $m$ even.}
\label{fig:skew-kite}
\end{figure}

\begin{Proposition}
\label{Prop1.4}
Given an $\cK_H$--trivial family $f : \C^n \times [0, 1], \{ 0\} \times [0, 1] \to \C^N, 0$, let $F_t$ denote the Milnor fiber of $H \circ f_t$, where $f_t(x) = f(x, t)$.  Then for any $0 \leq t \leq 1$, there is an isomorphism $\ga : H^*(F_0; R) \simeq H^*(F_t; R)$ such that 
$\ga(\cA_{\cV}(f_0; R)) = \cA_{\cV}(f_t; R)$.
\end{Proposition}
\par
Thus, the characteristic subalgebra is, up to isomorphism, essentially independent of the $\cK_H$ equivalence class. 
\begin{proof}
We first remark it is sufficient to prove the result for the case of an $\cK_H$--trivial germ $f : \C^n \times \R, (0, 0)  \to \C^N, 0$.  Then, this gives the result of some small interval about $0 \in \R$.  By the compactness of $[0, 1]$, we can cover it by a finite number of such open intervals, and then we can compose a finite number of such isomorphisms to obtain the desired one.  \par
Then we consider an an $\cK_H$- trivial germ $f : \C^n \times \R, (0, 0)  \to \C^N, 0$.  It is represented by a map $f : U \times [\gg, \gg] \to W$ such that there is a diffeomorphism onto a subspace
\begin{align}
\label{Eqn1.4}
\Phi : U^{\prime} \times [\gg, \gg] \times W^{\prime} &\to U \times [\gg, \gg] \times W  \\
(x, t, y)\qquad  &\mapsto \qquad (\varphi(x, t), t, \varphi_1(x, t, y)) \notag
\end{align}
such that $(0, t, 0) \mapsto (0, t, 0)$ for each $t$ and so that 
$f(x, t) = \varphi_1(x, t, f_0(x))$.  \par
Now let $t_0 \in (\gg, \gg)$.  
\begin{itemize}
\item[Step 1] first, we may find $0 < \gd << \gevar$ so that $B_{\gevar} \subset U^{\prime}$, $B_{\gd} \subset W^{\prime}$ and both 
\begin{equation}
\label{Eqn1.5}
 f_{t_0} : f_{t_0}^{-1}(B_{\gd}^*) \cap B_{\gevar} \to B_{\gd}^* \quad \text{ and } \quad f_{0} : f_{0}^{-1}(B_{\gd}^*) \cap B_{\gevar} \to B_{\gd}^* 
\end{equation} 
are Milnor fibrations for $f_{t_0}$, resp. $f_0$.  \par
Next, we choose $0 <\gevar_1 < \gevar$ and $0 <\gd_1 < \gd$ so that 
\begin{equation}
\label{Eqn1.6}
 T_1 = \Phi(B_{\gevar_1} \times [\gg, \gg] \times B_{\gd_1}) \,\, \subset \,\, B_{\gevar} \times [\gg, \gg] \times B_{\gd} \, .
\end{equation}  
\item[Step 2] As $T_1$ is an open neighborhood of $\{0 \} \times [\gg, \gg] \times \{0\}$ in $B_{\gevar} \times [\gg, \gg] \times B_{\gd}$, there exists 
$0 < \gevar_2 < \gevar_1$ and $0 < \gd_1 < \gd$ so that $B_{\gevar_2} \times [\gg, \gg] \times B_{\gd_1}) \subset T_1$.  Also, $\gd_1$ may be chosen small enough so that $B_{\gevar_2}$ and $B_{\gd_1}$ may be used for the Milnor fibers of both $f_0$ and $f_{t_0}$.  
\item[Step 3] Next, we repeat Step 2.  We may find $0 < \gevar_3 < \gevar_2$ and 
$0 < \gd_2 < \gd_1$ so that 
\begin{equation}
\label{Eqn1.7}
T_2 = \Phi(B_{\gevar_3} \times [\gg, \gg] \times B_{\gd_2}) \subset B_{\gevar_2} \times [\gg, \gg] \times B_{\gd_1}\, , 
\end{equation}  
and again $B_{\gevar_3}$ and $B_{\gd_2}$ may be used for the Milnor fibers.   
\item[Step 4] We repeat step 2 again obtaining 
$0 < \gevar_4 < \gevar_3$, $0 < \gd_3 < \gd_2$, and $T_3$. 
\item[Step 5] We repeat step 2 again obtaining 
$0 < \gevar_5 < \gevar_4$, $0 < \gd_4 < \gd_3$, and $T_4$. 
\end{itemize}
Next we choose an $0 < \eta << \gd_4$ so that the Milnor fibration of $H$ is given by $H : H^{-1}(B_{\eta}^*) \cap B_{\gd_4} \to B_{\eta}^*$.

Since $\Phi(\C^n \times [-\gg, \gg] \times  $

\vspace{20ex}
\end{proof}

there exists $0 < \gd << \eta$ 
such that for balls $B_{\gd} \subset \C$ and$B_{\eta} \subset \C^N$  (with 
all balls centered $0$), we let $\cF_{\gd} = H^{-1}(B_{\gd}) \cap B_{\eta}$ 
so $H : \cF_{\gd} \to B_{\gd}$ is the Milnor fibration of $H$, with Milnor fiber 
$\cV_w = H^{-1}(w) \cap B_{\eta}$ for each $w \in B_{\gd}$.  By continuity, 
there is an $\gevar > 0$ so that $f_0(B_{\gevar}) \subset 
\cF_{\gd}$.  By possibly shrinking all three values, 
$H \circ f_0 : f_0^{-1}(\cF_{\gd}) \cap  B_{\gevar} \to B_{\gd}$ is the 
Milnor fibration of $H \circ f_0$.  Also, by the parametrized transversality 
theorem, for almost all $w \in B_{\gd}$, $f_0$ is transverse to $\cV_w$ and 
so the Milnor fiber of $H \circ f_0$ is given by $$X_w \,\, = \,\, (H \circ 
f_0)^{-1}(w) \cap B_{\gevar} \,\, = \,\, f_0^{-1}(\cV_w) \cap B_{\gevar}\, 
.$$  
\par
Thus, if we denote $f_0 | X_w = f_{0, w}$, then in cohomology with 
coefficient ring $R$, $f_{0, w}^* : H^*(\cV_w ; R) \to H^*(X_w ; R)$.

 For any 
of the three types of matrices with $(*)$ denoting $( )$ for general matrices, 
$(sy)$ for symmetric matrices, or $(sk)$ for skew-symmetric matrices, we let
 $$\cA^{(*)}(f_0; R) \,\, \overset{def}{=} \,\, f_{0, w}^* (H^*(\cV_w ; R))\, ,$$
 which we refer to as the {\em characteristic subalgebra} of the cohomology 
of the Milnor fiber $H^*(X_w ; R)$ of $X_0$.  This is an algebra over $R$, and 
the cohomology of the Milnor fiber of the matrix singularity $X_0$ is a graded
module over $\cA^{(*)}(f_0; R)$ (both with coefficients $R$). 

Now the same arguments given in general for $\cV, 0$ apply to the varieties of singular $m \times m$ matrices of any of the three types.  Given $f_0 : \C^n, 0 \to \C^N$, with $M = \C^N$ denoting any of the three spaces of $m \times m$ matrices.  We denote the Milnor fiber of $f_0$ by $F_m^{(*)}$, where $(*)$ denotes $( )$, resp. $(sy)$, resp. 
$(sk)$. 
\par
\par

\begin{table}
\begin{tabular}{|l|p{2in}|p{2in}|} \hline
\multicolumn{1}{|p{1.in}|}{ {\bf Singularity Type} } & 
\multicolumn{1}{c}{\bf \lq\lq Universal Singularity $\cV$ \rq\rq} & \multicolumn{1}{|p{1.5in}|}{\bf 
Singularities of type $\cV$} \\ \hline\hline
{\it Discrimants } & Discriminants of Stable Germs  & Discriminants of Finitely Determined Germs\\ \hline
{\it  Bifurcation Sets} & Bifurcation Sets of $\cG$-Versal Unfoldings & Bifurcation Sets of $\cG$-Finitely Determined Unfoldings \\ \hline
{\it Hyperplane Arrangements} & Special Central Hyperplane Arrangements & Generic Versions of Special Hyperplane Arrangements \\ \hline
{\it Hypersurface Arrangements} & Special Central Hyperplane Arrangements  & Hypersuface Arrangements of Special Type \\  \hline
\underline{Exceptional Orbit Hypersurfaces} & Defined by Algebraic Group Representations with Open Orbits &  multiple examples:\\  \hline
{\it Quiver Discriminants} &  Discriminants for Quiver Representations of Finite Type & 
Discriminants from Mappings to Quiver Representation Spaces \\ \hline
{\it Cholesky-Type Factorizations} &  Discriminants for Cholesky-Type Factorizations & 
Discriminants for Cholesky-Type Factorizations for Matrix Families. \\ \hline
{\em Matrix Singularities} & Varieties of Singular $m \times m$ Matrices of Three Types &  Matrix Singularities of Three Types \\ \hline
\end{tabular}
\vspace*{0.2cm}
\caption{Examples of General Cases of Singularities of Given Types.}
\label{table:gen.cases}
\end{table}